\documentclass[reqno,11pt,letterpaper,amsmath,amssymb]{amsart}
\usepackage{amsmath,amssymb,amsthm,graphicx,mathrsfs,url}
\usepackage[applemac]{inputenc}
\usepackage[usenames,dvipsnames]{color}
\usepackage[colorlinks=true,linkcolor=Red,citecolor=Green]{hyperref}
\usepackage{enumitem}
\usepackage[russian, francais, english]{babel}
\usepackage{csquotes}
\usepackage[all]{xy}

\usepackage[margin=1.4in]{geometry}

% Include all pictures as MetaPOST
\DeclareGraphicsRule{*}{mps}{*}{}

\newtheorem{theo}{Theorem}
\newtheorem{prop}{Proposition}[section]

\newtheorem{lemm}[prop]{Lemma}
\theoremstyle{definition}
\newtheorem{corr}[theo]{Corollary}

\numberwithin{equation}{section}

\newcommand{\R}{\mathbb{R}}
\newcommand{\N}{\mathbb{N}}
\renewcommand{\C}{\mathbb{C}}
\newcommand{\Sc}{\mathcal{S}}
\newcommand{\Qc}{\mathcal{Q}}
\newcommand{\Tc}{\mathcal{T}}

\newcommand{\Pc}{\mathcal{P}}

\newcommand{\Z}{\mathbb{Z}}

\newcommand{\dd}{\mathrm{d}}
\newcommand{\Lie}{\mathcal{L}}

\newcommand{\e}{\mathrm{e}}

\newcommand{\strf}{{\tr}_\mathrm{s}^\flat}

\newcommand{\dom}{\mathcal{O}}

\newcommand{\wl}{\mathrm{wl}}

\let\Re=\Real

\DeclareMathOperator{\supp}{supp}

\DeclareMathOperator{\WF}{WF}

\DeclareMathOperator{\tr}{tr}

%%% Closed billiards trajectories with prescribed rebounds
\title{Closed billiards trajectories with prescribed bounces}
\author{Yann Chaubet}
\begin{document}

\maketitle

\begin{abstract}
We give the asymptotic growth of the number of primitive periodic trajectories of a two dimensional dispersive billiard, when we prescribe their number of bounces on one of the obstacles.
\end{abstract}

\section*{Introduction}\label{sec:intro}
Consider $D_0, D_1, \dots, D_r \subset \R^2$ ($r \geqslant 3$) some compact and strictly convex open sets, with smooth boundaries $\partial D_0, \dots, \partial D_r.$ We assume that $D_i \cap D_j = \emptyset$ whenever $i \neq j$. We moreover assume that the billiard $\bold{B'} = \{D_0, D_1, \dots, D_r\}$ satisfies the non-eclipse condition, that is,
$$
\mathrm{conv}(D_i \cup D_j) \cap D_k = \emptyset, \quad k \neq i,j,
$$
where $\mathrm{conv}(A)$ denotes the convex hull of a set $A$. We will denote $D = \bigcup_j D_j$. A \textit{billiard trajectory} is a piecewise Euclidian trajectory\footnote{By "Euclidian" we mean trajectories going in a straight line with constant speed 1.} $\gamma : I \to \R^2 \setminus D^\circ$ (here $I \subset \R$ is an interval) which rebounds on each $\partial D_j$ according to Fresnel Descartes' law (see Figure \ref{fig:billiard_trajectory}). 
\begin{figure}[h]
\includegraphics[scale=0.6]{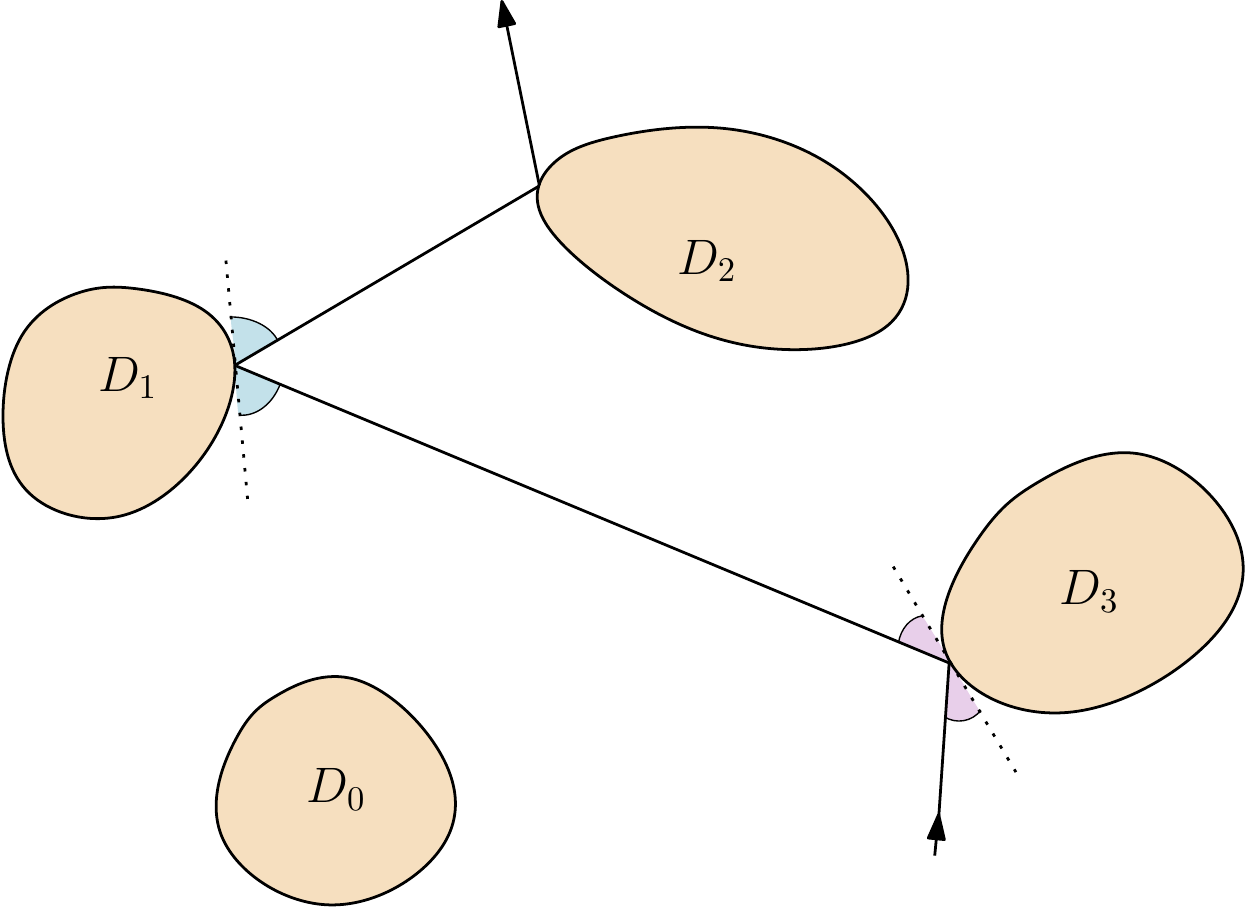}
\caption{A billiard trajectory}
\label{fig:billiard_trajectory}
\end{figure}
A trajectory $\gamma : [0, \tau] \to \R^2 \setminus D^\circ$ will be said to be \textit{closed} if $\gamma(0) = \gamma(\tau)$ and  $\gamma'(0) = \gamma'(\tau)$;  a closed trajectory will be said to be \textit{primitive} if $\gamma|_{[0, \tau']}$ is not closed for every $\tau'< \tau.$ We will identify two closed trajectories $\gamma_j : \R/\tau_j \Z \to \R^2 \setminus D^\circ$ ($j=1,2 $) whenever $\tau_1 = \tau_2$ and $\gamma_1(\cdot) = \gamma_2(\cdot + \tau)$ for some $\tau \in \R.$ Denote by $\Pc_\bold{B'}$ the set of primitive closed trajectories of the billiard table $\bold{B'}$. Then a result of Morita \cite{morita1991symbolic} states that there is $h_\bold B' > 0$ such that
\begin{equation}\label{eq:pot}
\sharp\{\gamma \in \Pc_\bold{B'}~:~\tau(\gamma) \leqslant t\} \sim \frac{\e^{h_\bold{B'}t}}{h_\bold{B'} t}, \quad t \to \infty,
\end{equation}
where $\tau(\gamma)$ denotes the period of a periodic trajectory $\gamma.$

The purpose of the present paper is to give the asymptotic growth of the number of primitive closed trajectories of $\bold{B'}$ when we additionnaly prescribe their number of rebounds on $D_0.$ More precisely, for $\gamma \in \Pc_\bold{B'}$ we denote by $r(\gamma)$ the number of rebounds of $\gamma$ on $D_0$; we have the following result.

\begin{theo}\label{thm:main}
There are $c, h_\bold{B} > 0$ such that for every $n \geqslant 1$, it holds
\begin{equation}\label{eq:main}
\sharp\{\gamma \in \Pc_\bold{B'}~:~\tau(\gamma) \leqslant t,~r(\gamma) = n\} \sim \frac{(ct)^n}{n!} \frac{\e^{h_\bold{B}t}}{h_\bold{B} t}, \quad t \to \infty.
\end{equation}
Moreover $h_\bold{B}$ depends only on the billiard table $\bold{B} = \{D_1, \dots, D_r\}.$
\end{theo}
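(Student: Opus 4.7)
The natural strategy is to combine Morita's symbolic coding of $\bold{B'}$ with a two-parameter Ruelle transfer operator method, in which a complex variable $z$ marks the letter corresponding to $D_0$, and then to extract the asymptotic from a Tauberian argument. By \cite{morita1991symbolic} (following Bunimovich--Sinai and Stoyanov), $\bold{B'}$ is modelled by the suspension over a mixing subshift of finite type $(\Sigma', \sigma)$ on the alphabet $\{0, 1, \ldots, r\}$ with H\"older roof $\phi : \Sigma' \to \R$ equal to the flight time; primitive closed trajectories $\gamma \in \Pc_\bold{B'}$ correspond bijectively to primitive $\sigma$-periodic orbits, with $\tau(\gamma) = S_k \phi(x)$ for $\sigma^k x = x$, and $r(\gamma)$ equal to the number of $0$'s in the length-$k$ word $x_0 \ldots x_{k-1}$. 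Forbidding the symbol $0$ yields a subshift $\Sigma$ coding the reduced billiard $\bold{B}$, so $h_\bold{B}$ will be an object intrinsic to $\bold{B}$.

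Introduce the weighted transfer operator
\[
 (L_{s, z} u)(y) = \sum_{\sigma x = y} z^{\indic_{[0]}(x)}\, \e^{-s \phi(x)}\, u(x), \qquad (s, z) \in \C \times \C,
\]
on a suitable space of H\"older functions, together with the two-parameter Ruelle zeta function $\zeta(s, z) = \det(I - L_{s, z})^{-1}$. Ruelle--Perron--Frobenius theory provides, near $(h_\bold{B}, 0)$, a simple isolated maximal eigenvalue $\lambda(s, z)$ jointly analytic in both variables, with $\lambda(h_\bold{B}, 0) = 1$ (which determines $h_\bold{B}$, depending only on $\bold{B}$), $\alpha := -\partial_s \lambda(h_\bold{B}, 0) > 0$ by positivity of $\phi$, and $\beta := \partial_z \lambda(h_\bold{B}, 0) > 0$ since $\partial_z L_{s,z}|_{z = 0}$ is a non-zero non-negative operator. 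Expanding
\[
 1 - \lambda(s, z) = \alpha(s - h_\bold{B}) - \beta z + O\bigl((s - h_\bold{B})^2 + z^2\bigr)
\]
and taking the power series of $-\log(1 - \lambda(s, z))$ in $z$ produces, for each $n \geqslant 1$, a pole of order $n$ at $s = h_\bold{B}$ with leading part $c^n / \bigl(n (s - h_\bold{B})^n\bigr)$, where $c := \beta / \alpha > 0$. Separating primitive orbits from proper iterates in the Euler product identifies this coefficient with
\[
 a_n(s) := \sum_{\gamma \in \Pc_\bold{B'}:\, r(\gamma) = n} \e^{-s \tau(\gamma)} = \frac{c^n}{n(s - h_\bold{B})^n} + (\text{less singular}),
\]
since proper iterates contribute terms of the form $\e^{-(n/d) s \tau(\gamma)}$ with $d \mid n$, $d < n$, which are analytic at $s = h_\bold{B}$. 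A Wiener--Ikehara--Delange Tauberian theorem for Dirichlet-type series with non-negative coefficients and a pole of order $n$ then delivers
\[
 \sharp\{\gamma \in \Pc_\bold{B'} : r(\gamma) = n,\ \tau(\gamma) \leqslant t\} \sim \frac{c^n t^{n-1}}{n!\, h_\bold{B}}\, \e^{h_\bold{B} t} = \frac{(ct)^n}{n!}\cdot \frac{\e^{h_\bold{B} t}}{h_\bold{B} t},
\]
which is \eqref{eq:main}.

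The main obstacle, as in the classical prime orbit theorem, is to verify the hypotheses of the Tauberian theorem: one must show that on the critical line $\Re s = h_\bold{B}$ the only singularity of $\log \zeta(s, z)$ is the one produced by $\lambda(s, z)$, and control the remainder in a strip $\Re s > h_\bold{B} - \varepsilon$ with enough decay in $|\Im s|$. For $z = 0$ this is precisely the non-trivial input Morita invokes for \eqref{eq:pot} applied to $\bold{B}$, relying on Dolgopyat--Stoyanov-type exponential decay of correlations and the non-arithmeticity of the length spectrum of $\bold{B}$. By analytic perturbation of the spectral gap, the estimate persists for $z$ in a small complex neighbourhood of $0$, which is all that is needed to extract the order-$n$ pole and apply the Tauberian theorem uniformly in $n$.
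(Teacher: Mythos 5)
Your proposal takes a genuinely different route from the paper. The paper works on the billiard phase space directly: it uses the microlocal theory of Pollicott--Ruelle resonances for open hyperbolic systems (Dyatlov--Guillarmou, as adapted to billiards by K\"uster--Sch\"utte--Weich) to build, from the scattering operator of the reduced billiard $\bold{B}$, a weighted transfer operator $\Tc_\pm(s)$ for the first-return map to $\pi^{-1}(\partial D_0)$, truncated by a cutoff $\varrho$. The key structural input is that the resolvent $\widetilde R_\pm(s)$ of $\bold{B}$ has a \emph{rank-one} residue at $s = h_\bold{B}$, so $\strf((\varrho\Tc_\pm(s))^n)$ has an order-$n$ pole with leading coefficient $(c_\pm)^n$; Delange's Tauberian theorem then gives the asymptotics for a \emph{weighted} count $N_\varrho(n,t)$, and the a~priori bounds of \S\ref{sec:apriori} (which use the symbolic coding) convert this into the honest count $N(n,t)$. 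You instead go fully symbolic: code $\bold{B'}$ by a subshift, mark the symbol $0$ with an extra variable $z$, and extract the order-$n$ pole at $s = h_\bold{B}$ from the coefficient of $z^n$ in $-\log(1-\lambda(s,z))$. This is the Lalley/Pollicott--Sharp strategy for counting orbits with a prescribed homological (or marking) datum, and it is a legitimate alternative. Its advantage is that you work on the trapped set itself, so no cutoff $\varrho$ appears and the passage from $N_\varrho$ to $N$ is unnecessary; the paper's advantage is that the rank-one projector $\Pi_\pm(h_\bold{B})$ immediately yields $\strf(A_\pm^n) = (\strf A_\pm)^n$, whereas your route requires a delicate uniform-in-$z$ analysis.

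There are three points where your sketch has gaps. First, and most substantially: the perturbation step ``by analytic perturbation of the spectral gap, the estimate persists for $z$ in a small complex neighbourhood of $0$'' is where most of the actual work lies. The pole of $\zeta(s,z)$ is at $s(z)$, which moves with $z$, so when you extract the $z^n$-coefficient by a Cauchy integral over a small circle $|z|=\delta$, the pole $s(z)$ sweeps a neighbourhood of $h_\bold{B}$ as $z$ varies, and $\log(1-\lambda(s,z))$ develops a branch singularity along a curve in the $(s,z)$ bidisc. Showing that the resulting coefficient has precisely an order-$n$ pole at $s=h_\bold{B}$ with leading coefficient $c^n/n$, together with the regularity on $\{\Re s \geqslant h_\bold{B}\}\setminus\{h_\bold{B}\}$ needed by Delange's theorem, requires a careful uniform separation of the Perron eigenvalue from the rest of the spectrum (and the Hölder-space operator $L_{s,0}$ is degenerate, so the persistence of the spectral gap as $z\to 0$ is itself something to verify, not to quote). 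Second, your claim that the non-primitive contributions $\sum_{d\mid n,\,d<n}\sum_{r(\gamma)=d} \e^{-(n/d)s\tau(\gamma)}$ are ``analytic at $s=h_\bold{B}$'' requires knowing the abscissa of convergence of $\sum_{r(\gamma)=d}\e^{-s\tau(\gamma)}$ is at most $h_\bold{B}$ (so that replacing $s$ by $(n/d)s\geqslant 2s$ gives absolute convergence past $h_\bold{B}$); this is an a~priori upper bound of exactly the type the paper proves in Proposition~\ref{prop:aprioribound}, and it cannot be skipped, because the crude bound coming from the entropy $h_\bold{B'}$ of the full billiard does not by itself guarantee $h_\bold{B'} < 2h_\bold{B}$. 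Third, a minor point: $\zeta(s,z) = \det(I - L_{s,z})^{-1}$ is not literally correct for transfer operators on Hölder spaces (they are not nuclear); you should instead isolate the leading eigenvalue via the Ruelle--Perron--Frobenius decomposition $L_{s,z} = \lambda(s,z)P(s,z) + Q(s,z)$ and express $\log\zeta(s,z) + \log(1-\lambda(s,z))$ as a function holomorphic across the critical line.

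Modulo these points, your overall calculation is correct, and in particular the final Tauberian bookkeeping --- from a leading Laurent coefficient $c^n/n$ at an order-$n$ pole to $N(n,t)\sim \tfrac{(ct)^n}{n!}\tfrac{\e^{h_\bold{B}t}}{h_\bold{B}t}$ --- is right.
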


As we will see in \S\ref{sec:apriori}, by using the symbolic representation of the billiard flow and (\ref{eq:pot}), one can prove that for some constants $a, b > 0$ we have
$$
at^{n-1}\exp(h_\bold{B}t) \leqslant \sharp\{\gamma \in \Pc_\bold{B'}~:~\tau(\gamma) \leqslant t,~r(\gamma) = n\} \leqslant bt^{n-1}\exp(h_\bold{B}t)
$$
provided $t$ is large enough; yet this method do not \textit{a priori} provide the more precise asymptotics (\ref{eq:main}).

Our approach for proving (\ref{eq:main}) is reminiscent of a previous work \cite{chaubet2021closed} about the asymptotic growth of the number of closed geodesics on negatively curved surfaces for which certain intersection numbers are prescribed. In particular we make use of the work of Dyatlov--Guillarmou \cite{dyatlov2016pollicott} about the existence of Pollicott--Ruelle resonances for open hyperbolic systems (the recent work of K\"uster--Sch\"utte--Weich \cite{kuster2021smooth} details how a hyperbolic billiard flow can be described by the framework of \cite{dyatlov2016pollicott}). This allows to obtain a microlocal description of the transfer operator $\Tc(s)$ associated to the first return map (of the billiard flow) to $\pi^{-1}(\partial D_0)$ (here $\pi : S\R^2 \to \R^2$ is the natural projection), weighted by $\exp(-s t_0(\cdot))$ where $t_0(\cdot)$ is the first return time to $\pi^{-1}(\partial D_0)$ (see \S\ref{sec:adding}), and to apply a Tauberian theorem of Delange to the (transversal) trace of the composition\footnote{Actually, we compute the trace of $(\varrho \Tc(s))^n$ for some cutoff function $\varrho \in C^\infty(\pi^{-1}(\partial D_0), [0,1]).$} $\Tc(s)^n$ (which is linked to some dynamical zeta function involving the periodic orbits rebounding $n$ times on $\partial D_0$).

Similar asymptotics for open dispersive billiards in $\R^d$ ($d \geqslant 3$) could also be obtained with our methods; however here we restrict ourselves to the case $d = 2$  for the sake of simplicity.

\subsection*{Related works}
In \cite{morita1991symbolic} Morita proves the asymptotics (\ref{eq:pot}) by constructing a symbolic coding of the billiard flow and by using the work of Parry-Pollicott \cite{parry1983analogue}. Later, Stoyanov \cite{stoyanov2012non} proved the more precise asymptotics
$$
\sharp\{\gamma \in \Pc_\bold{B'}~:~\tau(\gamma) \leqslant t\} = \int_2^x \frac{\dd t}{\log t} + O(\e^{ct}), \quad t \to +\infty,
$$
for some $c \in \left]0,  h_\bold B\right[$, by proving some non-integrability condition over the non-wandering set and by using Dolgopyat-type estimates (see also \cite{petkov2012distribution} for an asymptotics of the number of primitive closed trajectories with periods lying in exponentially shrinking intervals. We finally mention the book of Pektov--Stoyanov \cite{pektov1992geometry}.

\subsection*{Organization of the paper}
The paper is organized as follows. In \S\ref{sec:preliminaries} we present some geometrical and dynamical tools. In \S\ref{sec:adding} we introduce the weighted transfer operator associated to the first return map to $\partial D_0$ and we compute its Attiyah-Bott transversal trace. In \S\ref{sec:tauberian} we make use of a Tauberian argument. In \S\ref{sec:apriori} we prove some \textit{a priori} estimates on $\sharp\{\gamma \in \Pc_\bold{B'}~:~\tau(\gamma) \leqslant t,~r(\gamma) = n\}$. Finally in \S\ref{sec:proof} we combine the results of \S\S\ref{sec:tauberian},\ref{sec:apriori} to prove Theorem \ref{thm:main}.

\subsection*{Acknowledgements}
I thank Colin Guillarmou for fruitful discussions and for his relecture of the present work, as well as Fr\'ed\'eric Naud for suggesting to consider this problem, which is somehow analogous to the one considered in \cite{chaubet2021closed}. Finally I thank Benjamin K\"uster, Philipp Sch\"utte and Tobias Weich for important discussions about their recent work \cite{kuster2021smooth}. This project has received funding from the European Research Council (ERC) under the European Unions Horizon 2020 research and innovation programme (grant agreement No. 725967).

%% Preliminaries
\section{Preliminaries} \label{sec:preliminaries}
In this section we expose some well known facts about open dispersive billiards.

% The billard flow
\subsection{The billiard flow}
Let $D_1, \dots, D_r \subset \R^2$ be pairwise disjoint compact convex obstacles, where $r \in \N_{\geqslant 3}.$ We denote by $S\R^2$ the unit tangent bundle of $\R^2$ and $\pi : S\R^2 \to \R^2$ the natural projection. For $x \in \partial D_j$, we denote by $n_j(x)$ the outward unit normal vector to $\partial D_j$ at the point $x$. We define the (non glancing) billiard table $M$ as 
$$
M = N / \sim, \quad N =  S\R^2 \setminus \left(\pi^{-1}(D^\circ) \cup G\right),
$$
where $G = T\partial D$ and $D = \bigcup_{j=1}^r D_j$, and where $(x,v) \sim (y,w)$ if and only if
$$
x = y \in \partial D_j, \quad w = v - 2\langle v, n_j(x) \rangle n_j(x), \quad j = 1,\dots, r.
$$
The set $M$ is endowed with the quotient topology. We denote by $\varphi = (\varphi_t)_{t \in \R}$ the billiard flow (which is defined on an open subset of $\R \times M$). The manifold $M$ can be endowed with a differential structure by declaring that flow charts near $\pi^{-1}(\partial D)$ are smooth charts for $M$; we refer to the work of K\"uster--Sch\"utte--Weich \cite{kuster2021smooth} for a detailed exposition of the construction of this differential structure.  The flow $\varphi$ becomes a smooth flow on $M$ and we denote by $X$ the associated vector field. Define the Liouville one form $\alpha \in \Omega^1(\R^2 \times S^1)$ by
\begin{equation}\label{eq:defliouville}
\langle \alpha(x,v), \eta \rangle =  \langle \dd \pi(x,v) \cdot \eta, v \rangle, \quad (x,v) \in \R^2 \times S^1, \quad \eta \in T_{(x,v)} (\R^2 \times S^1),
\end{equation}
where $\pi : \R^2 \times S^1 \to \R^2$ is the projection over the first factor. 

\begin{lemm}
The form $\alpha$ gives rise to a one-form on $M$ which is smooth outside the glancing set $G$. Moreover $\alpha$ is of contact type and $X$ is its associated Reeb vector field, that is $\alpha \wedge \dd \alpha$ is a volume form and
$$
\iota_X \alpha = 1, \quad \iota_X \dd \alpha = 0,
$$
where $\iota_X$ denotes the interior product.
\end{lemm}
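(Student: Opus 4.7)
My plan is to prove the lemma in three steps: \emph{(i)} verify that $\alpha$ is invariant under the gluing identification $\sim$, so it descends to a one-form on $M$; \emph{(ii)} check that the descended form is smooth in the flow-chart atlas on $M$ from \cite{kuster2021smooth}; \emph{(iii)} establish the contact property and Reeb identities from a direct coordinate computation on the open dense set $\dom := M \setminus \pi^{-1}(\partial D)$, and extend them to $M \setminus G$ by smoothness and flow-invariance.

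For \emph{(i)}, fix a non-glancing $(x,v) \in \pi^{-1}(\partial D_j)$ and write the reflection as $R(x,v) = (x,w)$ with $w = v - 2\langle v, n_j(x)\rangle n_j(x)$. Since $\pi \circ R = \pi$, for any $\eta \in T_{(x,v)}\pi^{-1}(\partial D_j)$ one has
\[
\langle \alpha(x,w), \dd R\cdot\eta\rangle - \langle \alpha(x,v), \eta\rangle = \langle \dd\pi\cdot\eta, w - v\rangle = -2\langle v, n_j(x)\rangle\,\langle \dd\pi\cdot\eta, n_j(x)\rangle.
\]
The tangency condition $\eta \in T\pi^{-1}(\partial D_j)$ forces $\dd\pi\cdot\eta \in T_x\partial D_j$, which is orthogonal to $n_j(x)$; hence the difference vanishes and $\alpha$ descends.

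For \emph{(ii)} and \emph{(iii)}, on $\dom$ the form $\alpha$ is the standard Liouville form on $\R^2 \times S^1$; in coordinates $v = (\cos\theta, \sin\theta)$ one has $\alpha = \cos\theta\, \dd x_1 + \sin\theta\, \dd x_2$, from which the identities $\iota_X \alpha = 1$, $\iota_X \dd \alpha = 0$, and $\alpha \wedge \dd\alpha = -\dd x_1 \wedge \dd x_2 \wedge \dd\theta$ follow by direct computation. To reach a collision point, I would use the flow chart $(y,t) \in U \times (-\epsilon, \epsilon) \mapsto \varphi_t(y) \in M$, where $U$ is a small smooth transversal contained in $\pi^{-1}(\partial D_j)$. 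Cartan's formula gives $\Lie_X \alpha = 0$ on $\dom$, so flow-invariance combined with $\iota_X \alpha = 1$ yields $\alpha = \dd t + \beta$ in the chart, with $\beta$ the pullback of $\alpha|_U$. The essential computation is that, because $R$ fixes $T_x\partial D_j$, the evaluation of $\alpha|_U$ at a boundary point is independent of the choice of pre- or post-collision representative, so $\beta$ is a well-defined smooth 1-form on $U$; this gives smoothness of $\alpha$ on $M \setminus G$, and the Reeb identities extend by continuity. Non-degeneracy of $\alpha \wedge \dd\alpha$ at a collision follows from the flow-invariance $\Lie_X(\alpha \wedge \dd\alpha) = 0$ and the fact that every point of $M \setminus G$ flows into $\dom$ in arbitrarily short time, where $\alpha \wedge \dd\alpha$ is already known to be a volume form.

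I expect the main obstacle to be the smoothness step \emph{(ii)}: verifying that the two pre-/post-collision representations of $\alpha$ glue into a genuinely $C^\infty$ object in the intrinsic atlas of $M$, not merely a continuous one. Step \emph{(i)} only guarantees set-theoretic agreement at the collision slice $\{t = 0\}$; full smoothness requires the stronger statement that the coefficients of $\alpha$, when pulled back via the flow chart from both sides, coincide as smooth functions on all of $U$, which rests on $R$ fixing the boundary tangent space together with the atlas construction of \cite{kuster2021smooth}.
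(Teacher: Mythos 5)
Your argument is correct and reduces to the same key observation as the paper's: the Liouville form is invariant under the reflection identification at the boundary. The paper carries out this check in an explicit boundary-adapted chart $(s,\varphi,\tau)$, where $\alpha=\sin(\varphi)\,\dd s+\dd\tau$ and the gluing map $(s,\varphi,\tau)\mapsto(s,\pi-\varphi,\tau)$ manifestly preserves $\alpha$; your coordinate-free computation in step \emph{(i)} is exactly this calculation without names for the coordinates.

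The worry you flag about step \emph{(ii)} is unfounded, and you have in fact already supplied everything needed to close it. In the flow chart $U\times(-\epsilon,\epsilon)$, the identity $\Lie_X\alpha = \iota_X\dd\alpha = 0$ on $\dom$ (verified in your step \emph{(iii)}) together with $\iota_X\alpha=1$ forces $\alpha=\dd t+\beta^{\pm}$ with $\beta^{\pm}$ literally constant in $t$ on $\{t>0\}$ and on $\{t<0\}$. Each $\beta^{\pm}$ is the pullback of the smooth Liouville form on $S\R^2$ along the smooth inclusion of the pre- or post-collision slice of $\pi^{-1}(\partial D_j)$, hence each is a $C^\infty$ one-form on $U$. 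Your step \emph{(i)} shows that $\beta^{+}=\beta^{-}$ as forms on $U$ --- not merely ``set-theoretic agreement at $t=0$'' but equality of two $t$-constant smooth forms --- so $\alpha=\dd t+\beta$ is $C^\infty$ on the whole chart. Flow-invariance eliminates all $t$-dependence, which is precisely why the slice-level identity already yields the full smoothness statement.
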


\begin{proof}
The assertion is clear far from $\pi^{-1}(D)$. Thus we take $j \in \{1, \dots, r\}$ and some $(x, v) \in \pi^{-1}(\partial D_j) \setminus G_j$ (here $G_j = T \partial D_j$). Let $\psi :~]-\varepsilon, \varepsilon[ \to \R^2$ be a parametrization of $\partial D_j$ near $x$ such that $|\psi'(s)| = 1$ for $s \in ]-\varepsilon, \varepsilon[$ and $\psi(0) = x$. Let 
$\varphi_0$ be the (counterclockwise) angle between $n(x)$ and $v$. Let $U$ be a small neighborhood of $(0, \varphi_0, 0)$ in $\R^3$, and set 
$$V = U \sqcup U', \quad U' = \{(s, \pi - \varphi, \tau)~:~(s, \varphi, \tau) \in U\}.$$
As $\varphi_0 \notin \pi/2 + \pi \Z$, we may assume (up to shrinking $U$) that the map $\Psi : V \to \R^2 \times S^1$ defined by
$$
\Psi ~ : ~(s, \varphi, \tau) \mapsto \Bigl(\psi(s) + \tau R_\varphi n(s),~R_\varphi n(s) \Bigr)
$$
is a smooth local embedding, where $R_\varphi : S^1 \to S^1$ is the rotation of angle $\varphi$ and where we set $n(s) = n(\psi(s))$. From (\ref{eq:defliouville}), it is then immediate to check that, in the coordinates $(s, \varphi, \tau)$, we have
\begin{equation}\label{eq:alphacoordinates}
\alpha = \sin(\varphi) \dd s + \dd \tau.
\end{equation}
In particular we have $Q^* \alpha = \alpha$ where $Q$ denotes the map $(s, \varphi, \tau) \mapsto (s, \pi - \varphi, \tau)$. Let $p : N \to M$ be the natural projection. We have a map $\Phi : U \to M$ defined by 
$\Phi(z) = p(z)$ if $\Psi(z) \in N$ and $\Phi(z) = p(Q(z))$ otherwise. Up to shrinking $U$, this map realizes a local diffeomorphism from $U$ to a neighborhood of $p(x,v)$ in $M$ (this map is a chart for $M$ near $p(x,v)$, see \cite[\S4]{kuster2021smooth}). Since $Q^*\alpha = \alpha$, it follows that $\alpha$ gives rise to a smooth one-form on $\Phi(U).$ The fact that $\alpha$ is a contact form follows from the expression (\ref{eq:alphacoordinates}). In the coordinates $(s, \varphi, \tau)$, $X$ is represented by $\partial_\tau$; thus $X$ satisfies the announced Reeb conditions. \end{proof}

% The Anosov Property
\subsection{The Anosov property}\label{subsec:anosov}

The trapped set $\Lambda$ is defined as the set of points of $z \in M$ which satisfy
$$\sup T(z) = - \inf T(z) = +\infty \quad \text{where} \quad T(z) = \{t \in \R~:~\pi(\varphi_t(z)) \in \partial D\}.$$
We define the first (future and the past) return times to $\partial D$ by
$$
t_\pm(z) = \inf\{t > 0~:~\pi(\varphi_{\pm t}(z)) \in \partial D\}, \quad z \in M.
$$
Let $K = \Lambda\cap \partial D$. The (future and past) billiard maps $B_\pm : K \to K$ are then defined as
$$
B_\pm(z) = \varphi_{\pm t_\pm(z)}(z), \quad z \in K.
$$
By \cite{morita1991symbolic} (see also \cite[\S4.4]{chernov2006chaotic}), the billiard flow is uniformly hyperbolic, meaning that for each $z \in \Lambda$ there is $\dd \varphi_t-$invariant decomposition
$$
T_z\Lambda= \R X(z) \oplus E_u(z) \oplus E_s(z),
$$
which depends continuously on $z$, and such that for some $C, \nu > 0$ independent of $z \in \Lambda$, we have for some smooth norm $|\cdot|$ on $TM$,
$$
\left|\dd \varphi_t(z) v\right| \leqslant \left \lbrace \begin{matrix} C \e^{-\nu t} |v|,~ &v \in E_s(z),~&t\geqslant 0, \vspace{0.2cm}  \\  C \e^{-\nu |t|} |v|,~ &v \in E_u(z),~&t\leqslant 0. \end{matrix} \right.
$$

\subsection{The non-eclipse condition}
We will assume that our billiard table is non-eclipsing, in the sense that for all $1 \leqslant i < j < k \leqslant r$ we have
$$
\mathrm{Conv}(D_i \cup D_j) \cap D_k = \emptyset,
$$
where $\mathrm{Conv}(A)$ denotes the convex hull of a set $A$. By \cite{morita1991symbolic}, $B_\pm : K \to K$ is H\"older conjugated to a subshift of finite type. More precisely, let
$
\mathcal{A} = \{1, \dots, r\}
$
and 
$$
\Sigma = \{(u_n) \in \mathcal{A}^\Z~:~u_n \neq u_{n+1},~n \in \Z\}.
$$
Let $\sigma_\pm : \Sigma \to \Sigma$ be the map $(u_n) \mapsto (u_{n\pm 1})$. We endow $\Sigma$ with the topology coming from the distance
$$
\dd_\Sigma(u, v) = \sum_{n \in \Z} 2^{-|n|} |u_n - v_n|.
$$
Then there is a homeomorphism $\psi : K \to \Sigma$, which is H\"older continuous, such that 
$$
\sigma_\pm \circ \psi = \psi \circ B_\pm.
$$
In fact, $\psi$ is simply given by
$$
\psi(z)_n = j, \quad B_+^n(z) \in \partial D_j, \quad z \in K, \quad n \in \Z.
$$
Of course the billiard flow $(\varphi_t)$ is conjugated to the suspension of $\sigma_\pm$ associated to the time return map $t_\pm \circ \psi^{-1}$. This means that we have a H\"older homeomorphism
$$
\Psi : \Lambda \to (K \times \R_+) / \sim
$$
where 
$
(z, t_+(z)) \sim (B_+(z), 0)
$
for $z \in K.$ In the coordinates $(z, t)$, $X$ is simply represented by $\partial_t$. In what follows, we will denote by $\psi_n(z)$ the $n$-th term of the sequence $\psi(z).$ An immediate consequence of the existence of a conjugacy $\Psi$ as above is the following

\begin{lemm}\label{lem:exponentiallyclose}
There is $C > 0$ and $\beta > 1$ such that the following holds. Assume that $z, z' \in K$ satisfy
$$
\psi_n(z) = \psi_n(z'), \quad |n|\leqslant N.
$$
Then 
$
\dd(z, z') \leqslant C \beta^{-N}
$.
\end{lemm}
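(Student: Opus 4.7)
The plan is to exploit the uniform hyperbolicity of the return map $B_+ : K \to K$ together with the Markov character of the coding $\psi$. By the Anosov property recalled in \S\ref{subsec:anosov}, $B_+$ is itself uniformly hyperbolic on $K$: there exist $\epsilon_0 > 0$, $\lambda \in (0,1)$, a continuous $\dd B_+$-invariant splitting $T_zK = E_u(z) \oplus E_s(z)$, and local stable/unstable manifolds $W^s_{\epsilon_0}(z), W^u_{\epsilon_0}(z) \subset K$ along which $B_+^{n}$ contracts at rate $\lambda^n$ for $n \ge 0$, respectively $B_+^{-n}$ contracts at rate $\lambda^n$.

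First I would view $\psi$ as the coding attached to the natural partition $R_j := K \cap \pi^{-1}(\partial D_j)$, $j = 1,\dots,r$. Under the non-eclipse assumption this is a genuine Markov partition — its existence and properties are established in \cite{morita1991symbolic}: each $R_j$ is a rectangle with respect to the local product structure on $K$ inherited from the Anosov splitting, and has uniformly bounded diameter $\delta_0$.

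Second, the key step will be the cylinder diameter bound: for every admissible word $a = (a_{-N}, \dots, a_N) \in \mathcal{A}^{2N+1}$, the cylinder
$$
C_N(a) := \bigcap_{|k| \le N} B_+^{-k}(R_{a_k}) = \psi^{-1}\bigl\{u \in \Sigma : u_k = a_k,\ |k| \le N\bigr\}
$$
has diameter bounded by $C'\lambda^N$. The standard Bowen-type argument goes as follows. For $z, z' \in C_N(a)$, the iterates $B_+^N(z), B_+^N(z') \in R_{a_N}$ lie within $\delta_0$ of each other; since $B_+^N$ expands the unstable direction by $\lambda^{-N}$, this forces the unstable component of the separation between $z$ and $z'$ at time $0$ to be at most $\delta_0\lambda^N$. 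Symmetrically, $B_+^{-N}(z), B_+^{-N}(z') \in R_{a_{-N}}$ together with backward expansion of the stable direction controls the stable component by $\delta_0\lambda^N$. The local product structure then yields $\dd(z, z') \le C'\lambda^N$.

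Since the hypothesis $\psi_n(z) = \psi_n(z')$ for $|n| \le N$ is precisely $z, z' \in C_N(\psi(z))$, the cylinder bound gives $\dd(z,z') \le C'\lambda^N$, and the lemma follows with $\beta = \lambda^{-1} > 1$. The main technical point I expect to be the obstacle is the cylinder diameter estimate — or, more fundamentally, verifying that the obstacle partition is truly Markov with uniform hyperbolic constants — but this is a standard Bowen--Sinai-type fact in the Axiom A setting, and the concrete billiard case is handled in \cite{morita1991symbolic} with a recent exposition tailored to the microlocal framework we rely on in \cite{kuster2021smooth}.
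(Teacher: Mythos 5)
Your argument is correct, but it unpacks a step the paper takes for granted. The paper derives this lemma as ``an immediate consequence'' of Morita's statement that the conjugacy $\psi : K \to \Sigma$ is a H\"older homeomorphism (in both directions): with the product metric $\dd_\Sigma(u,v) = \sum_n 2^{-|n|}|u_n - v_n|$, agreement of $2N{+}1$ central symbols gives $\dd_\Sigma(\psi(z),\psi(z')) \lesssim 2^{-N}$, and H\"older continuity of $\psi^{-1}$ immediately yields $\dd(z,z') \leqslant C\beta^{-N}$ with $\beta = 2^\alpha$, $\alpha$ the H\"older exponent. What you have done instead is re-prove the H\"older bound for $\psi^{-1}$ from first principles: identifying $\psi$ with the itinerary coding for the obstacle partition $\{R_j\}$, noting this is Markov under non-eclipse, and running the Bowen cylinder-diameter argument (forward expansion controls the unstable separation, backward expansion the stable one, local product structure assembles the two). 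This is exactly the mechanism underlying Morita's H\"older statement, so the two routes are at bottom the same; yours is more self-contained and makes the exponential rate $\beta = \lambda^{-1}$ dynamically explicit, at the cost of re-deriving a cited fact.

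One cosmetic caveat: $K$ is a Cantor-like set, so writing $T_zK = E_u(z)\oplus E_s(z)$ is an abuse; the splitting lives in the tangent space of the cross-section $\pi^{-1}(\partial D)\setminus G$ restricted to $K$. Also note that the uniformity of the hyperbolic constants (and the boundedness of $\dd B_+$) relies on $K$ staying a definite distance from the glancing set, which the non-eclipse condition guarantees — you implicitly use this when invoking a single $\lambda$ and $\delta_0$, and it is worth saying so.
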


% Isolating blocks
\subsection{Isolating blocks}
In this subsection we show that we can work with the framework of \cite{dyatlov2016pollicott} (see also \cite[\S5]{kuster2021smooth} for a more detailed exposition). We have 
$$
\Lambda = \bigcap_{t \in \R} \varphi_t(V \setminus TD)
$$
where $V = \{z \in M~:~T_-(z) \neq \emptyset \text{ and }T_+(z) \neq \emptyset \} \subset M$. Here we set
$$
T_\pm(z) = \{t \in T(z)~:~\pm t > 0\}.
$$
By \cite[Theorem 1.5]{conley1971isolated}, $\Lambda$ is the maximal invariant set in some isolating block. More precisely, there exists a relatively compact neighborhood $U \subset M$ of $K$ such that $\partial U$ is smooth and 
$$\partial_0 U = \{(x,v) \in \partial U~:~v \in T_x \partial U\}$$
is a smooth submanifold of $\partial U$ of codimension $1$, and with the property that for some $\varepsilon > 0$ one has 
$$
z \in \partial_0 U \quad \implies \quad \forall |t| \in \left]0, \varepsilon \right[ ,~\varphi_t(z) \notin U.
$$
By proceeding as in \cite[Lemma 2.3]{guillarmou2017boundary}, we may find a vector field $\widetilde X$ on $M \setminus TD$ such that $X-\widetilde X$ is supported in an arbitrary small neighborhood of $\partial_0U$, which is arbitrarily small in the $C^\infty$ topology and such that for any boundary defining function $\rho : U \to \R_{\geqslant 0}$ of $\partial U$\footnote{This means that $\rho > 0$ on $U$, $\rho = 0$ on $\partial U$ and $\dd \rho \neq 0$ on $\partial U$.}, we have, for any $z \in \partial U$,
$$
\widetilde X \rho(z) = 0 \quad \implies \quad \widetilde X^2\rho(z) < 0.
$$
Moreover, we have $\Gamma_\pm(U) = \widetilde \Gamma_\pm(U)$ where 
$$
\Gamma_\pm(U) = \{z \in U~:~\varphi_t(z) \in U,~ \mp t \geqslant 0\}, \quad \widetilde \Gamma_\pm(U) = \{z \in U~:~\widetilde \varphi_t(z) \in U,~ \mp t \geqslant 0\},
$$
where $\widetilde \varphi_t$ denotes the flow of $\widetilde X$. Note also that $\mathrm{dist}(\Gamma_\pm(U), \partial_0 U)>0.$ For simplicity, we will denote $\Gamma_\pm = \Gamma_\pm(U).$

By \cite[Lemma 2.10]{dyatlov2016pollicott}, there are two vector subbundles $E_{\pm} \subset T_{\Gamma_\pm} U$ with the following properties :
\begin{enumerate}
\item $E_+|_K = E_u$, $E_-|_K = E_s$ and $E_\pm(z)$ depends continuously on $z \in \Gamma_\pm$;

\item For some constants $C',\nu' > 0$ we have
$$
\|\dd \widetilde \varphi_{\mp t} (z)v\| \leqslant C' \e^{-\nu' t}\|v\|, \quad v \in E_\pm(z), \quad z \in \Gamma_\pm, \quad t \geqslant 0;
$$
\item If $z \in \Gamma_\pm$ and $v \in T_zU$ satisfy $\langle \alpha(z), v \rangle = 0$ and $v \notin E_\pm(z)$, then as $t \to \mp \infty$
$$
\left\|\dd \widetilde \varphi_{t}(z)v \right\| \rightarrow \infty,\quad \frac{\dd \widetilde \varphi_t(z)v }{\left\|\dd \widetilde \varphi_t(z)v\right\|} \rightarrow E_{\mp}|_K.
$$
\end{enumerate}

% The resolvent of the billiard flow
\subsection{The resolvent of the billiard flow}\label{subsec:resolvent}

For $\Re(s) \gg 1$, we define the (future and past) resolvents $\widetilde R_\pm(s) : \Omega^\bullet_c(U) \to \mathcal{D}'^\bullet(U)$ by
$$
\widetilde R_{\pm}(s)\omega(z) =  \pm \int_{0}^{\widetilde t_{\mp,U}(z)} \widetilde \varphi_{\mp t}^*\omega(z) \e^{-ts} \dd t, \quad \omega \in \Omega^\bullet_c(U), \quad z \in U,
$$
where we set
$$
\widetilde t_{\pm, U}(z) = \inf\{t > 0~:~\widetilde \varphi_{\pm t}(z) \in \partial U\}, \quad z  \in U.
$$
Here $\Omega^\bullet_c(U)$ denotes the space of smooth differential forms which are compactly supported in $U$ while $\mathcal{D}'^\bullet(U)$ denotes the space of currents in $U$ (that is $\mathcal{D}'^k(U)$ is the dual space of $\Omega_c^{3-k}(U)$ for $k = 0, \dots, 3$).
Note that 
$$
\left(\Lie_{\widetilde X} \pm s\right) \widetilde R_\pm(s) = \widetilde R_\pm(s) \left(\Lie_{\widetilde X} \pm s \right) = \mathrm{Id}_{\Omega^\bullet_c(U)}.
$$
Then by \cite{dyatlov2016pollicott}, the family $s \mapsto \widetilde R_{\pm}(s)$ extends to a family of operators meromorphic in the parameter $s \in \C$, whose poles have residues of finite rank. Denote by $\mathrm{Res}(\widetilde X)$ the set of those poles. Near any $s_0 \in \mathrm{Res}(\widetilde X)$ we have for some finite rank projector $\Pi_\pm(s_0) : \Omega^\bullet \to \mathcal{D}'^\bullet$
$$
R_\pm(s) = H_\pm(s) + \sum_{j = 1}^{J(s_0)} \frac{(X \pm s)^{j - 1} \Pi_{\pm}(s_0)}{(s - s_0)^j}
$$
where $s \mapsto H_{\pm}(s)$ is holomorphic near $s_0$. Moreover we have $\supp(\Pi_\pm(s_0)) \subset \Gamma_\pm \times \Gamma_\mp$ and
\begin{equation}\label{eq:wfset}
\WF'(H_\pm(s)) \subset \Delta(T^*U) \cup \Upsilon_\pm \cup (E_\pm^* \times E_\mp^*), \quad \WF'(\Pi_\pm(s_0)) \subset E_\pm^* \times E_\mp^*,
\end{equation}
where $\Delta(T^*U) = \{(\xi, \xi),~\xi \in T^*U \} \subset T^*(U \times U)$ and 
$$
\Upsilon_\pm = \{(\Phi_t(z, \xi), (z,\xi)),~\pm t \geqslant 0,~\langle \xi, X(z) \rangle = 0,~z \in U~\widetilde \varphi_t(z) \in U\}.
$$
Here $\Phi_t$ denotes the symplectic lift of $\varphi_t$ on $T^*U$, that is 
$$
\Phi_t(z, \xi) = (\varphi_t(z), (\dd_z\varphi_t)^{-\top} \xi), \quad (z, \xi) \in T^*U, \quad \varphi_t(z) \in U,
$$ 
and the subbundles $E_\pm^* \subset T^*_{\Gamma_\pm}U$ are defined by 
$E_\pm^*(\R X(z) \oplus E_\pm) = 0.$ Also we denoted 
$$
\WF'(\widetilde R_\pm(s)) = \{(z, \xi, z', \xi') \in T^*(U \times U),~(z, \xi, z', -\xi') \in \WF(\widetilde R_\pm(s))\},
$$
where $\WF(\widetilde R_\pm(s)) \subset T^*(U \times U)$ is the H\"ormander wavefront set of (the Schwartz kernel of) $\widetilde R_\pm(s)$, see \cite[\S8]{hor1}, and 
$$
\WF'(\widetilde R_\pm(s)) = \{(z, \xi, z', \xi')~:~ (z, \xi, z, -\xi') \in \WF(\widetilde R_\pm(s))\}.
$$

% The scattering operator
\subsection{The scattering operator}\label{subsec:scatteringop}
We define
$$\widetilde \partial_\pm = \{z \in \partial U~:~\mp \widetilde X\rho(z) > 0\} \text{ and }\widetilde \partial_0 = \{z \in \partial U~:~ \widetilde X \rho(z) = 0\}.$$
The scattering map $\widetilde S_\pm : \widetilde \partial_\mp \setminus \Gamma_\mp \to \widetilde \partial_\pm \setminus \Gamma_\pm$ is defined by 
$$
\widetilde S_\pm(z) = \widetilde \varphi_{\widetilde t_{\pm, U}(z)}(z), \quad z \in \widetilde \partial_\mp \setminus \Gamma_\mp
$$ 
(see Figure \ref{fig:notations}).
The Scattering operator $\widetilde \Sc_\pm(s) : \Omega^\bullet_c(\widetilde \partial_\mp \setminus \Gamma_\mp) \to \Omega_c^\bullet(\widetilde \partial_\pm \setminus \Gamma_\pm)$ is then defined by
$$
\widetilde{\mathcal{S}}_{\pm}(s) \omega=\left(\widetilde{S}_{\mp}^{*} \omega\right) \mathrm{e}^{-s \widetilde t_{\mp, U}(\cdot)}, \quad \omega \in \Omega_{c}^{\bullet}\left(\partial_{\mp} \backslash \Gamma_{\mp}\right).
$$
Note that for $\Re(s) \gg 1$, $\widetilde \Sc_\pm(s)$ extends as an operator $C_c(\widetilde \partial_\mp, \wedge^\bullet T^*\widetilde \partial_\mp) \to C_c(\widetilde \partial_\pm, \wedge^\bullet T^*\widetilde \partial_\pm)$,  where $C_c(\widetilde \partial_\pm, \wedge^\bullet T^*\widetilde \partial_\pm)$ is the space of compactly supported continuous forms on $\widetilde \partial_\pm$, since for any $w \in \Omega^\bullet(U)$ and $t \in \R$ we have
$$
\|\varphi_t^*w\|_\infty \leqslant C\e^{C|t|} \|w\|_\infty.
$$
In what follows we let $\iota_\pm : \widetilde \partial_\pm \to U$ be the inclusion and $(\iota_{\pm})_* : \Omega_c^\bullet(\widetilde \partial_\pm) \to \mathcal{D}'^{\bullet + 1}(U)$ be the pushforward operator, which is defined by
$$
\int_{U} (\iota_{\pm})_*u \wedge v = \int_{\widetilde \partial_\pm} u \wedge \iota_\pm^*v, \quad u \in \Omega^\bullet_c(\widetilde \partial_\pm), \quad v \in  \Omega^\bullet(U).
$$

\begin{prop}\label{prop:scatresolv}
We have
\begin{equation}\label{eq:wf}
\WF(\widetilde R_\pm(s)) \cap N^*(\widetilde \partial_\pm \times \widetilde \partial_\mp) = \emptyset.
\end{equation}
In particular by \cite[Theorem 8.2.4]{hor1}, the operator $(\iota_\pm)^* \iota_X \widetilde R_\pm(s) (\iota_\mp)_*$ is well defined. Moreover, for $\Re(s) \gg 1$ large enough, we have
\begin{equation}\label{eq:scatresolv}
\widetilde \Sc_\pm(s) = (-1)^N (\iota_\pm)^* \iota_{\widetilde X} \widetilde R_\pm(s) (\iota_\mp)_* : \Omega^\bullet_c(\widetilde \partial_\mp) \to \mathcal{D}'^\bullet(\widetilde \partial_\pm),
\end{equation}
where $N : \mathcal{D}'^\bullet \to \mathcal{D}'^\bullet$ is the number operator\footnote{That is, $N(\omega) = k \omega$ for $\omega \in \Omega^k_c(\widetilde \partial_\pm).$}.
\end{prop}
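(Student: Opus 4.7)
The plan for (\ref{eq:wf}) is to check that each of the three sets in the description (\ref{eq:wfset}) of $\WF'(\widetilde R_\pm(s))$ is disjoint from $N^*(\widetilde \partial_\pm \times \widetilde \partial_\mp)$; I focus on $\widetilde R_+(s)$, the case of $\widetilde R_-$ being symmetric. The diagonal $\Delta(T^*U)$ sits over pairs $(z,z)$ with $z$ in both $\widetilde\partial_+$ and $\widetilde\partial_-$, which is empty since these sets are defined by strict inequalities of opposite sign on $\widetilde X\rho$. A covector in $\Upsilon_+\cap N^*(\widetilde\partial_+\times\widetilde\partial_-)$ would have its second factor $\xi$ proportional to $\dd\rho(z)$ at some $z\in\widetilde\partial_-$, and the requirement $\langle\xi,\widetilde X(z)\rangle=0$ would then force $\widetilde X\rho(z)=0$, contradicting $z\in\widetilde\partial_-$. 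Finally, for $z_+\in\Gamma_+\cap\widetilde\partial_+$ one has $\widetilde X(z_+)\notin T_{z_+}\partial U$, so that $(\R\widetilde X\oplus E_+(z_+))+T_{z_+}\partial U=T_{z_+}U$; the one-dimensional fibres $E_+^*(z_+)$ and $N^*_{z_+}\widetilde\partial_+$ therefore intersect only at $0$, and the analogous statement on the second factor rules out $E_+^*\times E_-^*$. With (\ref{eq:wf}) established, \cite[Theorem~8.2.4]{hor1} gives the continuity of pullback/pushforward needed to make sense of the composition (\ref{eq:scatresolv}).

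For (\ref{eq:scatresolv}), I would work for $\Re(s)\gg 1$, so that the integral representation from \S\ref{subsec:resolvent} converges absolutely on continuous forms, and carry out the computation locally in flow-box coordinates. Near a point of $U\setminus(\Gamma_+\cup\Gamma_-)$, introduce coordinates $(z',t)$ with $z'\in\widetilde\partial_-$ and $t\in(0,\widetilde t_{+,U}(z'))$ in which $\widetilde X=\partial_t$ and $\widetilde\varphi_\tau(z',t)=(z',t+\tau)$. In these coordinates, a $k$-form $\omega$ on $\widetilde\partial_-$ pushes forward to the current $(\iota_-)_*\omega=\omega(z')\otimes\delta_0(t)\,\dd t$, and since $\widetilde\varphi_{-\tau}^*$ simply translates $t$ by $-\tau$, the defining integral collapses to
\begin{equation*}
\widetilde R_+(s)(\iota_-)_*\omega\,(z',t)=\e^{-st}\,\omega(z')\wedge\dd t
\end{equation*}
on the interior of the flow-box. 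Applying $\iota_{\widetilde X}=\iota_{\partial_t}$ and using that $\omega$ contains no $\dd t$ produces $\iota_{\partial_t}(\omega\wedge\dd t)=(-1)^k\omega(z')$, and the pullback $\iota_+^*$ evaluates at the forward exit time $t=\widetilde t_{+,U}(z')=\widetilde t_{-,U}(\widetilde S_+(z'))$. Setting $z=\widetilde S_+(z')\in\widetilde\partial_+$ and noting $z'=\widetilde S_-(z)$, this gives $(-1)^k\,\e^{-s\widetilde t_{-,U}(z)}(\widetilde S_-^*\omega)(z)$; the prefactor $(-1)^N$ cancels the sign and reproduces $\widetilde\Sc_+(s)\omega(z)$.

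The main technical point is to justify the formal action of $\widetilde R_+(s)$ on the current $(\iota_-)_*\omega$ and the subsequent pullback by $\iota_+$, which is exactly what (\ref{eq:wf}) is used for. I would make this rigorous by approximating $(\iota_-)_*\omega$ by a family $u_\varepsilon\in\Omega^\bullet_c(U)$ of smooth forms whose wavefronts converge into $N^*\widetilde\partial_-$, computing $\widetilde R_+(s)u_\varepsilon$ pointwise via the absolutely convergent integral, and passing to the limit using (\ref{eq:wf}) together with Hörmander's continuity theorem \cite[Theorem~8.2.4]{hor1}. The case of $\widetilde R_-(s)$ follows from the same argument with past and future exchanged.
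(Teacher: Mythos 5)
Your argument for \eqref{eq:wf} is correct and, up to being unpacked case by case, is the same transversality argument the paper uses: any nonzero covector $\xi \in T^*_zU$ with $z \in \widetilde\partial_\pm$ that annihilates $\widetilde X(z)$ cannot lie in $N^*_z\widetilde\partial_\pm$, because $\widetilde X(z)$ is transverse to $T_z\partial U$; this kills the $\Upsilon_\pm$ and $E_\pm^* \times E_\mp^*$ contributions, and $\widetilde\partial_+ \cap \widetilde\partial_- = \emptyset$ kills the diagonal. No issue there.

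Your treatment of \eqref{eq:scatresolv} by a direct flow-box computation is a legitimate alternative to the paper's citation of \cite[Lemma 3.3]{chaubet2021closed}, and the sign bookkeeping with $\iota_{\partial_t}(\omega \wedge \dd t) = (-1)^k \omega$ correctly reproduces the $(-1)^N$ factor. However, there is a genuine gap in the step from forms supported away from the trapped boundary to general $\omega \in \Omega^\bullet_c(\widetilde\partial_\mp)$. Your flow-box coordinates are, as you note, only valid near points of $U \setminus (\Gamma_+ \cup \Gamma_-)$, so the computation identifies the two sides of \eqref{eq:scatresolv} only as distributions on $\widetilde\partial_\pm \setminus \Gamma_\pm$, for $\omega$ supported in $\widetilde\partial_\mp \setminus \Gamma_\mp$. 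The regularization you then propose --- approximating $(\iota_\mp)_*\omega$ by smooth forms and passing to the limit via \cite[Theorem 8.2.4]{hor1} --- addresses a different issue, namely the rigour of the pullback/pushforward composition; it does not extend the identity across $\Gamma_\pm$. To conclude that the two distributions on $\widetilde\partial_\pm$ agree even when $\supp\omega$ meets $\Gamma_\mp$ (and where the scattered form is defined only by continuous extension by zero), one must know that $\Gamma_\pm \cap \widetilde\partial_\pm$ is negligible for the induced Liouville density $|\iota_\pm^*\dd\alpha|$. The paper obtains this by combining Bowen's theorem (since $\Lambda$ is not an attractor, $\mu(\Gamma_\pm) = 0$ for $\mu = |\alpha \wedge \dd\alpha|$) with the local flow-box diffeomorphism $U_\pm \times [0,\delta) \to U$, $(y,t) \mapsto \varphi_{\mp t}(y)$ and the flow-invariance of $\Gamma_\pm$. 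Without this measure-zero input, your argument establishes \eqref{eq:scatresolv} only as an identity between operators $\Omega^\bullet_c(\widetilde\partial_\mp \setminus \Gamma_\mp) \to \mathcal{D}'^\bullet(\widetilde\partial_\pm \setminus \Gamma_\pm)$, which is weaker than the stated proposition.
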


\begin{proof}
By definition $\widetilde X(z)$ is transverse to $T_z \widetilde \partial_\pm$ for $z \in \widetilde \partial_\pm.$ In particular, if $(z, \xi) \in T^*\widetilde \partial_\pm$ satisfies $\langle \xi, \widetilde X(z) \rangle = 0$ and $\langle \xi, T_z\widetilde \partial_\pm \rangle = 0$ then $\xi = 0.$ As $\widetilde \partial_+ \cap \widetilde \partial_- = \emptyset$ we obtain (\ref{eq:wf}) by (\ref{eq:wfset}).

Now let $W_\pm \subset \widetilde \partial_\pm$ be open sets such that $\overline W_\pm \subset \widetilde \partial_\pm.$ As $\overline W_\pm \cap \partial_0 = \emptyset$, there is $\varepsilon > 0$ such that 
$
\tilde t_{\mp, U}(z) > \varepsilon
$
for every
$
z \in W_\pm.
$
In particular, the proof of \cite[Lemma 3.3]{chaubet2021closed} applies and leads to the fact that (\ref{eq:scatresolv}) holds when $\widetilde \Sc_\pm(s)$ is seen as an operator $\Omega_c^\bullet(\widetilde \partial_\mp \setminus \Gamma_\mp) \to \mathcal{D}'^\bullet(\widetilde \partial_\pm \setminus \Gamma_\pm)$. By \cite[Theorem 5.6]{bowen1975ergodic}, as $\Lambda$ is not an attractor, we have $\mu(\Gamma_\pm) = 0$ where $\mu$ is the measure $|\alpha \wedge \dd \alpha|$. Take $U_\pm \subset \widetilde \partial_\pm$ a small neighborhood of $\Gamma_\pm$ in $\widetilde \partial_\pm$ and $\delta > 0$ small enough. Since $\Gamma_\pm \cap \widetilde \partial_0 = \emptyset$, we may assume that the map
$$
U_\pm \times [0, \delta) \to U, ~(y, t) \mapsto \varphi_{\mp t}(y)
$$
realizes a smooth diffeomorphism onto its image. In particular, because $\varphi_{\mp t}(\Gamma_\pm) \subset \Gamma_\pm$ for $t > 0$, we have $\mu_{\widetilde \partial_\pm}(\Gamma_\pm \cap \widetilde \partial_\pm) = 0$ where $\mu_{\widetilde \partial_\pm}$ corresponds to the measure $|\iota_\pm^* \dd \alpha|$. Thus we may proceed by similar arguments given in the proof of \cite[Proposition 3.2]{chaubet2021closed} we obtain that (\ref{eq:scatresolv}) holds when $\widetilde \Sc_\pm(s)$ is seen as an operator $\Omega_c^\bullet(\widetilde \partial_\mp) \to \mathcal{D}'^\bullet(\widetilde \partial_\pm)$.
\end{proof}

%% Adding an obstacle
\section{Adding an obstacle}\label{sec:adding}

In this section we add an other obstacle $D_0$ and we will consider some weighted transfer operator associated to the first return map to $\pi^{-1}(\partial D_0)$; we will use the description of its microlocal structure to define and compute its flat trace.

% Notations
\subsection{Notations}\label{subsec:notations}

We add another convex obstacle $D_0$, and we assume that the billiard table $(D_0, D_1, \dots, D_r)$ satisfies the non-eclipse condition. We define 
$$M', \Lambda', K', (\varphi_t'), T_\pm', t_\pm', B_\pm'$$ in the same way we defined $M, \Lambda, K, (\varphi_t), T_\pm, t_\pm, B_\pm$ (see \S\ref{sec:preliminaries}) by replacing the billiard table $\mathbf{B} = \{D_1, \dots, D_r\}$ by the billiard table $\mathbf{B}' = \{D_0, D_1, \dots, D_r\}.$ Let 
$$
P_\pm' : \Lambda'^{(\pm 1)} \to \pi^{-1}(\partial D'), \quad z \mapsto \varphi_{\pm t'_\pm(z)}(z),
$$
where 
$$\Lambda'^{(\pm 1)} = \{z \in M'~:~t'_\pm(z) < \infty\}.$$
Let $V_0 \subset \pi^{-1}(\partial D_0)$ be a relatively compact neighborhood of $K' \cap \pi^{-1}(\partial D_0)$ such that $V_0 \cap T\partial D_0 = \emptyset$, and set 
$$
V_\pm = \{z \in \partial U \cap \Lambda'^{(\pm 1)}~:~P'_\pm(z) \in V_0\}.
$$
Note that $U$ is a subset of $M$. However we may see $U$ as a subset of $M'$ since $U$ does not intersect $\pi^{-1}(D_0)$.
We also let $W_\pm$ be a neighborhood of $\Lambda_\pm \cap \partial U$ in $\partial U$ such that $W_\pm \cap \supp(\widetilde X - X) = \emptyset$ and we set $Y_\pm = W_\pm \cap V_\pm.$ We take $\phi_\pm \in C^\infty_c(V_\pm, [0,1])$ (resp. $\psi_\pm \in C^\infty_c(W_\pm), [0,1])$ such that $\phi_\pm \equiv 1$ near $(B'_\pm)^{-1}(K')$ (resp. $\psi_\pm \equiv 1$ near $\Lambda_\pm$) ; we define 
$$\chi_\pm = \phi_\pm \psi_\pm \in C^\infty_c(Y_\pm).$$ Note that $P'_\pm$ realizes a diffeomorphism $V_\pm \to P'_\pm(V_\pm) \subset \pi^{-1}(\partial D_0)$ which we denote by $Q_\pm$. We define
$
\Qc_\pm(s) : \mathcal{D}'^\bullet_c(Y_\pm) \to \mathcal{D}'^\bullet_c(Z_\pm),
$
where $Z_\pm = Q_\pm(Y_\pm)$, by
$$
\Qc_\pm(s)w = \e^{-s t_\pm'(\cdot)}\left(Q_\pm^{-1}\right)^*w, \quad w \in \Omega^\bullet_c(Y_\pm)
$$
(see Figure \ref{fig:notations}).
\begin{figure}[h]
\includegraphics[scale=0.6]{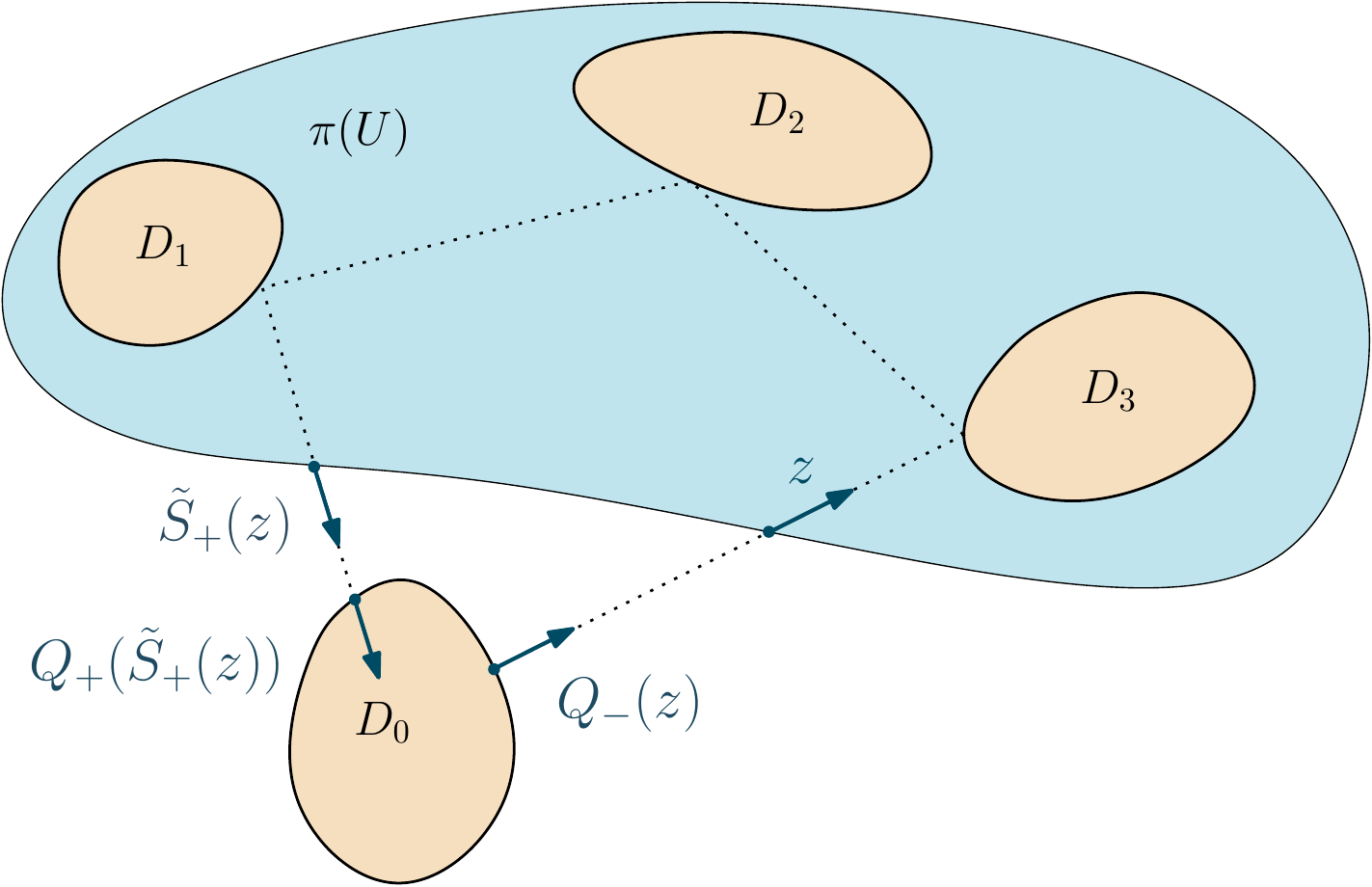}
\caption{The maps $Q_\pm$ and $\tilde S_\pm$}
\label{fig:notations}
\end{figure}
We finally set, with $Z_\pm = Q_\pm(Y_\pm) \subset \pi^{-1}(\partial D_0),$
\begin{equation}\label{eq:tcal}
\mathcal{T}_\pm(s) = \Qc_\pm(s) \chi_\pm \widetilde \Sc_\pm(s) \chi_\mp \Qc_\mp(s)^\top : \Omega^\bullet_c(Z_\mp) \to \mathcal{D}'^\bullet(Z_\pm). 
\end{equation}
The operator $\Tc_\pm(s)$ is the transfer operator associated to the first return map to $\pi^{-1}(\partial D_0)$ weigthed by $\e^{-s t_{0, \pm}(\cdot)}$, where $t_{0, \pm}(z) = \inf \{t > 0~:~ \pi( \varphi'_{\pm t}(z)) \in \partial D_0\}$ are the first (future are past) return times to $\partial D_0$ of a point $z \in \pi^{-1}(\partial D_0).$

% Composing the scattering maps
\subsection{Composing the scattering maps}\label{subsec:composingscat}
Let $Z = Z_+ \cap Z_-$ and 
$$\varrho = (\chi_+ \circ Q_+^{-1})(\chi_- \circ Q_-^{-1}) \in C^\infty_c(Z).$$
We have the following result.

\begin{prop}\label{prop:composition}
For any $n \geqslant 2$, the composition $(\varrho \Tc_\pm(s))^n : \Omega^\bullet_c(Z) \to \mathcal{D}'^\bullet(Z)$ is well defined.
\end{prop}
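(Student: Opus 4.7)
The plan is to control the wavefront set of $\mathcal{T}_\pm(s)$ precisely enough to apply H\"ormander's composition theorem (\cite[Theorem 8.2.14]{hor1}) at each step of the $n$--fold composition. Since $\mathcal{Q}_\pm(s)$ consists of a smooth pushforward/pullback by the diffeomorphism $Q_\pm$ times multiplication by $\e^{-s t'_\pm(\cdot)}$, the central object is $\widetilde{\mathcal{S}}_\pm(s)$.

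First I would compute $\WF'(\widetilde{\mathcal{S}}_\pm(s))$ using the representation (\ref{eq:scatresolv}) of Proposition \ref{prop:scatresolv} together with (\ref{eq:wfset}). Standard calculus for composition of operators with pushforward/pullback along the smooth embeddings $\iota_\pm$ and with the zeroth--order operator $\iota_{\widetilde X}$ applies as soon as the wavefront set of $\widetilde{R}_\pm(s)$ avoids $N^*(\widetilde\partial_\pm \times \widetilde\partial_\mp)$, which is (\ref{eq:wf}). The diagonal component $\Delta(T^*U)$ drops out because $\widetilde\partial_+\cap \widetilde\partial_-=\emptyset$, the flow component $\Upsilon_\pm$ restricts to the twisted graph of the scattering map $\widetilde S_\pm$ (lifted symplectically), and the anisotropic part $E_\pm^*\times E_\mp^*$ survives upon restriction because $E_\pm^*\cap N^*\widetilde\partial_\pm =0$ (if $\xi\in E_\pm^*$ then $\langle\xi,\widetilde X\rangle=0$, and $\widetilde X$ is transverse to $\widetilde\partial_\pm$, so $\xi$ annihilating $T\widetilde\partial_\pm$ would force $\xi=0$). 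Transporting these sets by $Q_\pm$, and writing $F_\pm = Q_\pm\circ \widetilde S_\pm\circ Q_\mp^{-1}$ for the first return map to $\pi^{-1}(\partial D_0)$ and $\mathcal{E}_\pm^{*,0}\subset T^*\pi^{-1}(\partial D_0)$ for the transported bundles, I expect
\begin{equation*}
\WF'(\mathcal{T}_\pm(s)) \subset \mathcal{L}_{F_\pm}\;\cup\;\bigl(\mathcal{E}_\pm^{*,0}\times \mathcal{E}_\mp^{*,0}\bigr),
\end{equation*}
where $\mathcal{L}_{F_\pm}$ denotes the symplectic lift of the graph of $F_\pm$. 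Both pieces are disjoint from the zero section in either factor.

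Next, to compose $\varrho\mathcal{T}_\pm(s)\circ \varrho\mathcal{T}_\pm(s)$, I would verify H\"ormander's criterion at the middle variable $y\in Z$. The key algebraic input is $E_+^*\cap E_-^*=0$ in $T^*U$: a covector annihilating both $\R X\oplus E_+$ and $\R X\oplus E_-$ annihilates $TU$. Restricting to $Z\subset \pi^{-1}(\partial D_0)$ and transporting by $Q_\pm$ preserves this transversality, so $\mathcal{E}_+^{*,0}\cap \mathcal{E}_-^{*,0}=0$. Consequently, four cases of the composition at $y\in Z$ are to be checked:
\begin{itemize}
\item \emph{Graph $\times$ Graph.} One obtains the symplectic lift of $F_\pm\circ F_\pm$, which is a smooth canonical relation away from the zero section.
\item \emph{Exotic $\times$ Exotic.} The middle $y$--covector must simultaneously lie in $\mathcal{E}_\pm^{*,0}$ (from the second factor, output side) and $\mathcal{E}_\mp^{*,0}$ (from the first factor, input side), hence is zero by transversality.
\item \emph{Graph $\times$ Exotic} and \emph{Exotic $\times$ Graph.} Here the graph part transports the covector $\eta$ at $y$ to $(F_\pm^*)^{\mp1}\eta$ at the other endpoint and contributes a full cone of directions; however the composition criterion asks only that the problematic configurations, where one of the outer covectors is zero, do not appear. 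The graph covectors are nowhere vanishing on both sides, so such configurations are automatically excluded.
\end{itemize}
The compact support of $\varrho$ confined to $Z$ ensures that the resulting kernel is properly supported and that the iteration stays within a compact piece of $\pi^{-1}(\partial D_0)$ where the bundles $\mathcal{E}_\pm^{*,0}$ extend continuously. By induction, $(\varrho\mathcal{T}_\pm(s))^n$ has wavefront set of the same shape, with graph part replaced by $\mathcal{L}_{F_\pm^n}$.

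The main technical obstacle I anticipate is the bookkeeping for the mixed (graph $\times$ exotic) cases: one must check not only the H\"ormander condition at the middle variable but also that the resulting wavefront bound for the composed operator still fits in the template $\mathcal{L}_{F_\pm^n}\cup(\mathcal{E}_\pm^{*,0}\times \mathcal{E}_\mp^{*,0})$, so that the induction closes. The analogous bookkeeping was carried out in \cite[\S3]{chaubet2021closed} for the geodesic--flow setting, and the arguments there transfer here once one has the wavefront bound for $\widetilde{\mathcal{S}}_\pm(s)$ above; the only genuine new input needed is the transversality $\mathcal{E}_+^{*,0}\cap \mathcal{E}_-^{*,0}=0$ on $\pi^{-1}(\partial D_0)$, which as explained follows from hyperbolicity of the flow $\varphi$.
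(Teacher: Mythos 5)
Your proposal uses exactly the same ingredients as the paper---the decomposition of $\WF'(\widetilde\Sc_\pm(s))$ into a graph part coming from $\Upsilon_\pm$ and an ``exotic'' part coming from $E_\pm^*\times E_\mp^*$, H\"ormander's Theorem~8.2.14, and the transversality of the stable and unstable conormal bundles over the trapped set---and the verification that the first composition is legitimate is essentially identical to the paper's. Where you diverge, and where the issue you flag as the ``main technical obstacle'' is genuinely a gap, is the inductive step. You propose to show that the full wavefront-set bound $\mathcal{L}_{F_\pm^n}\cup(\mathcal{E}_\pm^{*,0}\times\mathcal{E}_\mp^{*,0})$ is stable under one more composition. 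This requires that the transported bundles $\mathcal{E}_\pm^{*,0}$ be invariant under the symplectic lift of the first return map $F_\pm$: the (graph)$\,\circ\,$(exotic) term produces $\Phi_{F_\pm}\bigl(\mathcal{E}_\pm^{*,0}\bigr)$, and for your template to close you would need this to lie again in $\mathcal{E}_\pm^{*,0}$. But $E_\pm^*$ is only an \emph{extension} of the Anosov bundle off the trapped set, defined on $\Gamma_\pm$ inside the isolating block $U$; the return map to $\pi^{-1}(\partial D_0)$ leaves $U$ and re-enters, and there is no reason the extension should be transported to itself along the way. The paper does not claim, and does not need, this invariance.

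What the paper does instead is never attempt to bound $\WF\bigl((\varrho\Tc_\pm(s))^n\bigr)$ in a shape that reproduces itself. It isolates only the two sets that enter H\"ormander's composition criterion: $A$ (middle covectors paired with zero on the input side), which is the same at every step and is contained in the transport of $E_\mp^*$, and $B_n$ (middle covectors paired with zero on the output side of the $n$-fold composition), which it bounds recursively by the \emph{forward-propagated} exotic covectors, with no attempt to identify these with $E_\pm^*$. The disjointness $A\cap B_n=\emptyset$ is then obtained from a soft hyperbolicity argument: a covector appearing in both would be simultaneously contracted under $\dd\varphi_t^\top$ as $t\to+\infty$ and as $t\to-\infty$, which forces the base point into $\Lambda'$ and the covector into $E_u'^*\cap E_s'^*=\{0\}$, a contradiction. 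This only uses the splitting on the trapped set, where it is genuinely flow-invariant, and avoids altogether the invariance question your template requires. So while your decomposition and the key transversality input are the right ones, your induction does not close as stated, and to repair it you would either have to prove the (nontrivial) invariance of $\mathcal{E}_\pm^{*,0}$ under $F_\pm$, or switch to the paper's bookkeeping in terms of $A$ and $B_n$.
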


\begin{proof}
By \cite[Theorem 8.2.4]{hor1} and Proposition \ref{prop:scatresolv} we have
\begin{equation}\label{eq:wfscat}
\WF'(\widetilde \Sc_\pm(s)) \subset \left\{\dd \left(\iota_\pm \times \iota_\mp\right)(z, z')^\top \cdot (\xi, \xi')~:~ (z, z', \xi, \xi') \in \WF'\left(\widetilde R_\pm(s)\right)\right\},
\end{equation}
where $\iota_\pm \times \iota_\mp : \widetilde \partial_\pm \times \widetilde \partial_\mp \hookrightarrow U \times U$ is the inclusion. To prove that $(\varrho \Tc_\pm(s))^2$ is well defined, it suffices to show by \cite[Theorem 8.2.14]{hor1} that 
$
A \cap B_1 = \emptyset
$
where
$$
A = \left\{(z, \xi) \in T^*Z~:~\exists z^{\prime} \in Z,\left(z^{\prime}, 0, z, \xi\right) \in \mathrm{WF}^{\prime}\left(\varrho {\mathcal{T}}_{\pm}(s)\right)\right\}
$$
and 
$$
B_1 = \left\{(z, \xi)\in T^*Z~:~ \exists z^{\prime} \in Z,\left(z, \xi, z^{\prime}, 0\right) \in \mathrm{WF}\left(\varrho {\mathcal{T}}_{\pm}(s)\right)\right\}.
$$
Note that $\left(\dd \iota_\pm(z)^\top\right)|_{\ker X(z)} : \ker X(z) \to T^*_z \widetilde \partial_\pm$  is injective for any $z \in \widetilde \partial_\pm$, since $X(z)$ is transverse to $T_z \widetilde \partial_\pm$. Moreover $Q_\pm : Y_\pm \to Z_\pm$ is a diffeomorphism and thus $\dd Q_\pm^{-1}(z)^\top : T^*_z \widetilde \partial_\pm \to T^*_z\pi^{-1}(\partial D_0)$ is injective for any $z \in Y_\pm.$ Now by (\ref{eq:tcal}) we have
$$
\WF'(\varrho \Tc_\pm(s)) \subset \dd (Q_\pm \times Q_\mp)^\top \left(\WF'(\widetilde \Sc_\pm(s)) \cap \supp(\chi_\pm \times \chi_\mp) \right),
$$
Moreover by (\ref{eq:wf}) and (\ref{eq:wfscat}) we have
$$
\WF'(\widetilde \Sc_\pm(s)) \subset \dd(\iota_\pm \times \iota_\mp)^\top\left(\Upsilon_\pm \cup (E_\pm^* \times E_\mp^*)\right),
$$
since $\Delta(T^*U) \cap \pi^{-1}(\widetilde \partial_\pm \times \widetilde \partial_\mp) = \emptyset.$ By injectivity of $\dd Q_\pm^{-1}(z)^\top : T^*_z \widetilde \partial_\pm \to T^*_z\pi^{-1}(\partial D_0)$ we obtain
$$
A \subset \dd (Q_\mp^{-1})^\top \dd (\iota_\mp)^\top E_\mp^* \quad \text{and} \quad B_1 \subset \dd (Q_\pm^{-1})^\top \dd (\iota_\pm)^\top E_\pm^*.
$$
We claim that this implies $A \cap B_1 = \emptyset$. Indeed, let $(z, \xi) \in T^*Z_\pm$ which lies in $\left(\dd (Q_\mp^{-1})^\top \dd (\iota_\mp)^\top E_\mp^*\right) \cap \left(\dd (Q_\pm^{-1})^\top \dd (\iota_\pm)^\top E_\pm^*\right).$ Thus $z$ lies in $\Lambda'$ and there exists $(z_\pm, \xi_\pm)$ in $E_\pm^*$ such that 
$$
(z, \xi) = \dd(Q_\pm^{-1})^\top \dd(\iota_\pm)^\top (z_\pm, \xi_\pm).
$$
There are neighborhoods $U_\pm$ of $z_\pm$ in $M'$ and smooth functions $s_\pm : U_\pm \to \R$ such that $Q_\pm(z_\pm') = \varphi_{s_\pm(z_\pm')}(z_\pm')$ for $z_\pm' \in U_\pm$ and $\varphi_{s_\pm(z_\pm')}(z_\pm') \in Z$ for $z'_\pm \in U_\pm$. Because $\xi_\pm \in \ker X(z_\pm)$ we see that
$$
\dd (Q^{-1}_\pm)^\top \dd (\iota_\pm)^\top (z_\pm, \xi_\pm) = \dd \iota^\top \dd_{z} \left(\varphi_{-s_\pm(z_\pm)}\right)^\top \xi_\pm
$$
where $\iota : Z \hookrightarrow M'$ is the inclusion. Because $\dd \iota^\top : \ker X \to T^*Z$ is injective, we obtain
$$
\xi_- = \dd \Bigl[u \mapsto \varphi_{s_-(z_-) - s_+(z_+)}(u)\Bigr](z_-)^\top \xi_+.
$$
Now we have $z_\pm \in \Lambda'$ and since $\xi_\pm \in E_\pm^*$ we obtain $\xi_+ \in E_u'^*(z_+)$ and $\xi_- \in E_s'^*(z_-).$ Thus $\xi \in E_u'^*(z) \cap E_s'^*(z) = \{0\}$. Here we denoted by 
$$T_zM' = \R E_s'(z') \oplus \R E_u'(z') \oplus \R X(z'), \quad z' \in \Lambda',$$
the hyperbolic decomposition of $TM'$ over $\Lambda'$. We conclude that $A \cap B_1 = \emptyset$, which concludes the case $n = 2$.

By \cite[Theorem 8.2.14]{hor1} we also have the bound
$$
\WF((\varrho \Tc_\pm(s))^2) \subset \left(\WF'(\varrho \Tc_\pm(s)) \circ \WF'(\varrho \Tc_\pm(s))\right) \cup (B_1 \times \underline 0) \cup (\underline 0 \times A),
$$
where $\underline 0 \subset T^*M'$ denotes the zero section. Therefore, the set $B_2$, which is defined by
$$
B_{2}=\left\{(z, \xi) \in T^*Z~:~\exists z' \in Z,~\left(z, \xi, z^{\prime}, 0\right) \in \mathrm{WF}\left(\left(\varrho {\mathcal{T}}_{\pm}(s)^{2}\right)\right)\right\},
$$
can be written
$$
\begin{aligned}
\bigl\{ (z, \xi) \in T^*Z~:~ \exists z',z'' \in Z,~&\exists \eta \in T_{z'}^*Z, ~(z, \xi, z'-\eta) \in \WF(\varrho \Tc_\pm(s)) \\
&\quad \quad \quad  \quad \quad \quad \text{ and }(z', \eta, z'', 0) \in \WF(\varrho \Tc_\pm(s))\bigr\} \cup B_1.
\end{aligned}
$$
As $\dd (Q_+^{-1})^\top \dd (\iota_+)^\top E_+^* \cap \dd (Q_-^{-1})^\top \dd( \iota_-)^\top E_-^* = 0$ (as shown above), we obtain
$$
\begin{aligned}
B_2 \subset \Bigl\{ (z, \xi)~:~&(z, \xi, z', \eta) \in \dd(Q_\pm^{-1} \times Q_\mp^{-1})^\top \dd (\iota_\pm \times \iota_\mp)^\top \Upsilon_\pm \\
& \qquad \qquad \qquad  \text{ for some } \eta \in \dd (Q_\pm^{-1})^\top \dd (\iota_\pm)^\top(E_\pm^*) \Bigr\}.
\end{aligned}
$$
This leads to
$$
\begin{aligned}
B_2 &\subset \Bigl\{\dd (Q_\pm^{-1})^\top \dd (\iota_\pm)^\top \Phi_t(z, \zeta)~:~ (z, \zeta) \in T^*Y_\mp,~ \langle X(z), \zeta \rangle = 0,\\
&\qquad \qquad \qquad  \dd(\iota_\mp)^\top(z, \zeta) \in \dd (Q_\pm \circ Q_\mp^{-1})^\top \dd( \iota_\pm)^\top E_\pm^*,~\varphi_t(z) \in \widetilde \partial_\pm U,~t \geqslant 0 \Bigr\}.
\end{aligned}
$$
As before, this set cannot intersect $\dd (Q_\mp^{-1})^\top E_\mp^*$ since otherwise we would have $z' \in \Lambda'$ and $\xi' \in T^*_z M'$ contracted in the past and in the future by $\dd \varphi_t'^\top$. Thus $B_2 \cap A = \emptyset$ and we obtain that $(\varrho \Tc_\pm(s))^3$ is well defined. By iterating this process we obtain that $(\varrho \Tc_\pm(s))^n$ is well defined for every $n \geqslant 2$, which concludes the proof.
\end{proof}

\subsection{The flat trace of $\Tc_\pm(s)$}\label{subsec:flattrace}
Let $\mathcal{A} : \Omega^\bullet_c(\partial) \to \mathcal{D}'^\bullet(Z)$ be an operator such that
$
\WF'(\mathcal{A}) \cap \Delta = \emptyset,
$
where $\Delta$ is the diagonal in $T^*(Z \times Z)$.
Then the flat trace of $\mathcal{A}$ is defined as
$$
\strf \mathcal{A} = \langle \iota^*_\Delta A, 1 \rangle,
$$
where $\iota_\Delta : z \mapsto (z,z)$ is the diagonal inclusion and $A \in \mathcal{D}'^{n}(Z\times Z)$ is the Schwartz kernel of $\mathcal{A}$, i.e.
$$
\int_Z \mathcal{A}(u) \wedge v = \int_{Z \times Z} A \wedge \pi_1^*u \wedge \pi_2^*v, \quad u,v \in \Omega^\bullet_c(Z),
$$
where $\pi_j : Z \times Z \to Z$ is the projection on the $j$-th factor ($j=1,2$). In fact we have
\begin{equation}\label{eq:alternatedtrace}
\strf (A) = \sum_{k=0}^2 (-1)^{k+1} \mathrm{tr}^\flat(A_k),
\end{equation}
where $\mathrm{tr}^\flat$ is the transversal trace of Attiyah-Bott \cite{atiyah1967lefschetz} and where we denoted by $A_k$ the operator $C^\infty_c\bigl(Z, \wedge^kT^*Z\bigr) \to \mathcal{D}'\bigl(Z, \wedge^k T^*Z\bigr)$ induced by $A$ on the space of $k$-forms. The purpose of this subsection is to prove the following result.

\begin{prop}
For $n \geqslant 1$, the flat trace of $(\varrho \Tc_\pm(s))^n$ is well defined and we have
\begin{equation}\label{eq:trace}
\strf\left((\varrho \Tc_\pm(s))^n\right) = \sum_{r(\gamma) = n} \frac{\tau^\sharp(\gamma)}{\tau(\gamma)} \e^{-s\tau(\gamma)} \left(\prod_{z \in R(\gamma)} \varrho^2(z)\right)^{\tau(\gamma) / \tau^\sharp(\gamma)}
\end{equation}
whenever $\Re(s) \gg 1$, where the sum runs over all periodic trajectories $\gamma$ rebounding $n$ times on $\partial D_0$. Here $\tau^\sharp(\gamma)$ is the primitive length of $\gamma.$ and
$$
R(\gamma) = \{(\gamma(\tau),\dot \gamma(\tau))~:~\tau \in \R\} \cap \pi^{-1}(\partial D_0)
$$
is the set of incidence vectors of $\gamma$ along $D_0.$
\end{prop}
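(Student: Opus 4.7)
My plan is to verify well-definedness of the flat trace, unfold the Schwartz kernel of $(\varrho\Tc_\pm(s))^n$ as a weighted pullback along the $n$-th iterate of the first return map $P$ to $\pi^{-1}(\partial D_0)$, apply the Atiyah--Bott transversal trace formula to each graded piece, and reorganize the resulting sum over fixed points as a sum over closed billiard trajectories.

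The inductive wavefront computation carried out in the proof of Proposition \ref{prop:composition} already gives, away from the zero section,
$$
\WF'\bigl((\varrho\Tc_\pm(s))^n\bigr) \subset \dd Q_\pm^{-\top}\dd\iota_\pm^\top E_\pm^* \times \dd Q_\mp^{-\top}\dd\iota_\mp^\top E_\mp^*,
$$
and the transversality $E_+^*(z)\cap E_-^*(z)=\{0\}$ on $\Lambda'$ exploited there forces this set to be disjoint from the diagonal of $T^*(Z\times Z)$. Hence $\strf\bigl((\varrho\Tc_\pm(s))^n\bigr)$ is well defined for every $n\geqslant 1$.

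Using (\ref{eq:scatresolv}) and the definition of $\Qc_\pm(s)$, one sees that for $\Re(s)\gg 1$ the operator $\varrho\Tc_\pm(s)$ acts as the weighted pullback $\omega\mapsto \varrho^2\,\e^{-s t_0(\cdot)}\,P^*\omega$, where $P$ is the first return map to $\pi^{-1}(\partial D_0)$ in the appropriate time direction and $t_0$ its return time; the weight $\varrho^2$ arises because the cutoff $\varrho$ acts immediately before and immediately after $\Tc_\pm(s)$ at each intermediate return. Iterating gives
$$
(\varrho\Tc_\pm(s))^n\omega = \Bigl(\prod_{i=0}^{n-1}\varrho^2(P^iz)\,\e^{-s t_0(P^iz)}\Bigr)(P^n)^*\omega.
$$
Applying the Atiyah--Bott transversal trace formula to each graded piece and summing with the alternating signs from (\ref{eq:alternatedtrace}), the Lefschetz identity $\sum_k (-1)^k\tr\wedge^k M = \det(I-M)$ together with the fact that $P$ preserves the symplectic form $\dd\alpha$ on the Poincar\'e section (so $\det\dd P^n=1$) and is hyperbolic on $\Lambda'$ (which fixes the sign of $\det(I-\dd P^n(z))$) makes the determinant in the denominator cancel with the Lefschetz numerator, leaving a sum of the form $\sum_{P^nz=z}\e^{-s\tau(z)}\prod_i\varrho^2(P^iz)$ with $\tau(z)=\sum_{i=0}^{n-1}t_0(P^iz)$, up to an overall sign absorbed into the $(-1)^N$ prefactor of Proposition \ref{prop:scatresolv}.

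Finally, the fixed points of $P^n$ on $\Lambda'$ are precisely the incidence vectors on $\partial D_0$ of closed trajectories $\gamma$ with $r(\gamma)=n$: writing each such $\gamma$ as the $(\tau(\gamma)/\tau^\sharp(\gamma))$-fold iterate of its primitive $\gamma^\sharp$, the set $R(\gamma)=R(\gamma^\sharp)$ consists of $r(\gamma^\sharp)=n\tau^\sharp(\gamma)/\tau(\gamma)$ distinct fixed points of $P^n$, each of which accumulates the common weight $\bigl(\prod_{z\in R(\gamma)}\varrho^2(z)\bigr)^{\tau(\gamma)/\tau^\sharp(\gamma)}\e^{-s\tau(\gamma)}$ along one full period. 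Regrouping these contributions produces the announced formula (\ref{eq:trace}). The main obstacle is the wavefront-set bookkeeping of the first step---it must be uniform enough in $n$ to permit diagonal restriction for every iterate---together with the careful combinatorial conversion between fixed points of the iterated map $P^n$ and the set of periodic trajectories together with their iterates.
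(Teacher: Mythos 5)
Your overall strategy mirrors the paper's: unfold $(\varrho\Tc_\pm(s))^n$ as a weighted pullback along the $n$-th iterate of the first return map to $\pi^{-1}(\partial D_0)$, apply the Atiyah--Bott transversal trace formula pointwise, and reorganize the fixed-point sum over closed billiard trajectories. But you are missing the regularization step that makes the argument rigorous. The Atiyah--Bott formula (Corollary 5.4 of Atiyah--Bott) applies to operators whose fixed-point set is finite (or at least compactly supported near the diagonal), while the map $B_{\mp,0}^n$ has infinitely many fixed points on $\Lambda'$. The paper inserts a compactly supported time cutoff $g_L=g(t^{(n)}_{\pm,0}(\cdot)-L)$, applies Atiyah--Bott to $g_L(\varrho\Tc_\pm(s))^n$ (which has a finite effective fixed-point set), and then lets $L\to\infty$, using the estimate $\|\dd^k(B_{\pm,0})^n(z)\|\leqslant C_k\exp(C_k t^{(n)}_{\pm,0}(z))$ to control the symbol growth and justify the convergence of the resulting infinite sum when $\Re(s)\gg 1$. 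Without this truncation you cannot legitimately apply Atiyah--Bott, nor can you conclude that the infinite sum over fixed points converges, which is exactly what the condition $\Re(s)\gg 1$ in the statement is for.

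A secondary point: the wavefront-set bound you cite from the proof of Proposition \ref{prop:composition} was established there to show that the compositions are well defined, i.e.\ to control the intersection with the zero section; it does not, as stated, give you disjointness from the diagonal $\Delta(T^*Z)$. The inclusion you write down (the wavefront of the $n$-th iterate lying in $\dd Q_\pm^{-\top}\dd\iota_\pm^\top E_\pm^*\times \dd Q_\mp^{-\top}\dd\iota_\mp^\top E_\mp^*$ away from the zero section) is plausible and does, together with $E_+^*\cap E_-^*=\{0\}$, give the diagonal avoidance, but it is not literally what the cited proof establishes; you would have to carry out that extra step. The paper side-steps this by constructing $\strf$ as the $L\to\infty$ limit of traces of genuinely compactly supported kernels, for which the diagonal restriction is unproblematic. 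Finally, your sign discussion (Lefschetz cancellation requiring $\det(I-\dd P^n)$ to have a fixed sign) is the right idea but is asserted rather than justified; this is exactly the point the paper delegates to the detailed computation in \cite[Proposition 3.6]{chaubet2021closed}.
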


\begin{corr}
As $s \mapsto (\varrho \Tc_\pm(s))^n$ extends meromorphically to the whole complex plane, so does the right hand side of (\ref{eq:trace}).
\end{corr}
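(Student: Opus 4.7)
The plan is to interpret $(\varrho\Tc_\pm(s))^n$ as a weighted transfer operator for the $n$-th iterate of the first return map $F_\pm$ to $\pi^{-1}(\partial D_0)$ and to evaluate its flat trace by the Atiyah--Bott fixed point formula.

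First I would check that the flat trace is well-defined, i.e.\ that $\WF'((\varrho\Tc_\pm(s))^n)$ meets the diagonal only trivially. The wavefront analysis in the proof of Proposition~\ref{prop:composition} shows that, up to the identity-type strand coming from $\Upsilon_\pm$, the set $\WF'((\varrho\Tc_\pm(s))^n)$ is contained in pairs whose first component lies in $\dd(Q_\pm^{-1})^\top \dd(\iota_\pm)^\top E_\pm^*$ and whose second component lies in $\dd(Q_\mp^{-1})^\top \dd(\iota_\mp)^\top E_\mp^*$. A covector lying in the diagonal would thus lie simultaneously in $E_+^*$ and $E_-^*$ above the same base point, which by the hyperbolic splitting $T_z\Lambda' = \R X(z)\oplus E_s'(z)\oplus E_u'(z)$ forces it to vanish. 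The $\Upsilon_\pm$-strand is harmless because each factor of $\varrho\Tc_\pm(s)$ advances time strictly by the first return time to $\partial D_0$, bounded below by a positive constant.

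Next I would unwind the Schwartz kernel of $\varrho\Tc_\pm(s)$. The three exponential factors $\e^{-s t'_\pm}$, $\e^{-s\widetilde t_{\mp,U}}$, $\e^{-s t'_\mp}$ telescope into $\e^{-s t_{0,\pm}(\cdot)}$ (the first return time to $\partial D_0$), while the two cutoffs $\chi_\pm$ present in $\Tc_\pm(s)$ combine with the left multiplication by $\varrho = (\chi_+\circ Q_+^{-1})(\chi_-\circ Q_-^{-1})$ to produce $\varrho^2$ at each bounce. Thus $\varrho\Tc_\pm(s)$ is geometrically a weighted pullback by $F_\pm$, and $(\varrho\Tc_\pm(s))^n$ has Schwartz kernel supported on the graph of $F_\pm^n$ with weight $\prod_{i=0}^{n-1}\varrho^2(F_\pm^i z)\cdot \e^{-sT_n(z)}$, where $T_n(z)=\sum_{i=0}^{n-1}t_{0,\pm}(F_\pm^i z)$.

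Then I would apply (\ref{eq:alternatedtrace}) degree by degree on the two-dimensional manifold $Z$. At each fixed point $z_0$ of $F_\pm^n$ in $Z$ the alternating sum $\sum_{k=0}^2(-1)^{k+1}\mathrm{tr}(\wedge^k dF_\pm^n(z_0))$ equals $-\det(I - dF_\pm^n(z_0))$, and since $F_\pm$ is the Poincar\'e return map of the Reeb flow of the contact form $\alpha$ it is symplectic on $Z$; hyperbolicity then forces $\det(I - dF_\pm^n(z_0))<0$ and the Jacobian factor cancels against its absolute value, leaving only the accumulated weight. The fixed points of $F_\pm^n$ in $Z$ are in bijection with pairs $(\gamma,z_0)$, where $\gamma$ is a closed billiard trajectory with $r(\gamma)=n$ and $z_0\in R(\gamma)$; grouping by the underlying closed trajectory $\gamma = \gamma_0^k$ (with $\gamma_0$ primitive, $r(\gamma_0)=m$, $km=n$), the fixed points on the orbit share the common weight $\e^{-s\tau(\gamma)}\prod_{z\in R(\gamma)}\varrho^2(z)^k$, and the combinatorics of passing from a sum over fixed points of $F_\pm^n$ to a sum over closed trajectories $\gamma$ yields precisely the prefactor $\tau^\sharp(\gamma)/\tau(\gamma)=1/k$.

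The main obstacle is the careful book-keeping in this last step: one must track determinants, orientations, and multiplicities to verify that the Poincar\'e-map Jacobian drops out completely and that the $\tau^\sharp/\tau$ prefactor emerges from the correspondence between fixed points of $F_\pm^n$ and the set $R(\gamma)$. The Corollary then follows at once: by \cite{dyatlov2016pollicott} the resolvent $\widetilde R_\pm(s)$ is meromorphic on $\C$, so by Proposition~\ref{prop:scatresolv} is $\widetilde \Sc_\pm(s)$, and hence so are $\Tc_\pm(s)$ and $(\varrho\Tc_\pm(s))^n$ (the other ingredients being entire in $s$); the wavefront estimates above depend only on the dynamics and are uniform in $s$ away from poles, so $s\mapsto \strf((\varrho\Tc_\pm(s))^n)$ is meromorphic, and (\ref{eq:trace}) extends from $\{\Re s\gg 1\}$ to $\C$ by analytic continuation.
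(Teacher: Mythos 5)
Your final paragraph is the actual proof of the Corollary, and it is essentially the paper's implicit argument; everything preceding it re-derives equation (\ref{eq:trace}), which is the content of the preceding Proposition and is taken as given when stating the Corollary.

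To be precise: the paper supplies no written proof for this Corollary, treating it as an immediate consequence once the flat trace identity (\ref{eq:trace}) has been established on $\{\Re s \gg 1\}$. The only things one has to observe are (i) by \cite{dyatlov2016pollicott} the resolvent $\widetilde R_\pm(s)$ continues meromorphically to all of $\C$ with wavefront bounds (\ref{eq:wfset}) that are locally uniform in $s$; (ii) by Proposition~\ref{prop:scatresolv} and the definitions in \S\ref{subsec:notations}, the same is true of $\widetilde\Sc_\pm(s)$, $\Tc_\pm(s)$ and $(\varrho\Tc_\pm(s))^n$, the remaining ingredients (cutoffs, $\Qc_\pm(s)$, $\e^{-st'_\pm}$) being entire; (iii) the transversality condition $\WF'\bigl((\varrho\Tc_\pm(s))^n\bigr)\cap\Delta=\emptyset$ established in Proposition~\ref{prop:composition} persists for all $s$ away from the poles because those WF bounds are $s$-uniform, so $s\mapsto\strf\bigl((\varrho\Tc_\pm(s))^n\bigr)$ is meromorphic on $\C$; and (iv) the right-hand side of (\ref{eq:trace}) equals this meromorphic function on $\{\Re s\gg1\}$ and therefore inherits the meromorphic extension. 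This is exactly what your closing paragraph says. The lengthy middle portion of your proposal — unwinding the Schwartz kernel of $(\varrho\Tc_\pm(s))^n$, applying Atiyah--Bott degree by degree, checking $\det(I-dF_\pm^n(z_0))<0$, and recovering the $\tau^\sharp(\gamma)/\tau(\gamma)$ prefactor — is a reasonable sketch of the proof of the Proposition itself, but it is not needed to prove the Corollary, which presupposes that identity. A minor remark on that middle part: the reason the $\Upsilon_\pm$-strand meets the diagonal only at the zero section is hyperbolicity of the return map along periodic orbits (a nonzero covector fixed by the linearized Poincar\'e map would contradict the hyperbolic splitting), not merely the lower bound on the return time; but again, this concerns the Proposition, not the Corollary under review.
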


\begin{proof}
For $z \in Z$ we define the first (future and past) return times to $\pi^{-1}(\partial D_0)$ by 
$$
t_{\pm, 0}(z) = \inf \{t > 0~:~\varphi'_{\pm t}(z) \in \pi^{-1}(\partial D_0)\}.
$$
We set 
$
\Lambda_{\pm, 0} = \{z \in Z~:~t_{\pm, 0}(z) < \infty\},
$
and we define by $B_{\pm,0} : Z \to \pi^{-1}(\partial D_0)$ the first (future and past) return maps to $\pi^{-1}(\partial D_0)$ by 
$$
B_{\pm, 0}(z) = \varphi'_{t_{\pm, 0}}(z), \quad z \in \Lambda_{\pm, 0}.
$$
For $n \geqslant 1$ we define the sets $\Lambda^{(n)}_{\pm, 0} \subset Z$ by induction as follows. We set 
$
\Lambda^{(1)}_{\pm, 0} = \Lambda_{\pm, 0}
$
and 
$$
\Lambda^{(n + 1)}_{\pm, 0} = \left\{z \in \Lambda_{\pm, 0}~:~B_{\pm, 0}(z) \in \Lambda_{\pm,0}^{(n)}\right\}, \quad n \geqslant 1.
$$
In particular $(B_{\pm, 0})^n(z)$ is well defined for $z \in \Lambda_{\pm, 0}^{(n)}.$ We finally set
$$
t_{\pm,0}^{(n)}(z) = \sum_{k=0}^{n-1} t_{\pm, 0}\left((B_{\pm, 0})^k(z)\right), \quad z \in \Lambda_{\pm, 0}^{(n)},
$$
and $t_{\pm, 0}^{(n)}(z) = +\infty$ for $z \in Z \setminus \Lambda^{(n)}_{\pm, 0}$. We now fix $n \geqslant 1.$ Let $g \in C^\infty(\R, [0,1])$ such that $g \equiv 1$ on $]-\infty, 1]$ and $g \equiv 0$ on $[2, +\infty[$. For $L > 0$ we define
$$
g_L(z) = g\left(t_{\pm, 0}^{(n)}(z) - L\right), \quad z \in Z.
$$
Then by definition of $\Tc_\pm(s)$, the operator $g_L \left(\varrho \Tc_\pm(s)\right)^n : \Omega^\bullet_c(Z) \to \mathcal{D}'^\bullet(Z)$ coincides with the operator
$$
w~\longmapsto~ g_L(\cdot) \left(\prod_{k = 0}^n \varrho^2\left((B_{\mp, 0})^k(\cdot)\right) \right) \e^{-s t_{\pm, 0}^{(n)}(\cdot)} \left((B_{\mp, 0})^n\right)^*w.
$$
It now follows from the Atiyah-Bott trace formula \cite[Corollary 5.4]{atiyah1967lefschetz} that\footnote{See the proof of \cite[Proposition 3.6]{chaubet2021closed} for more details.}
\begin{equation}\label{eq:pretrace}
\langle \iota_\Delta^*K_{\varrho, \pm}(s), g_L\rangle = \sum_{\substack{z \in Z \\ B_{\mp, 0}^n(z) = z}} \e^{-t_{\pm, 0}^{(n)}(z)} g_L(z) \left(\prod_{k=0}^{n-1} \varrho^2\left(B_{\mp}^k(z)\right)\right).
\end{equation}
It is a classical fact that for every $k,n\geqslant 1$, there is $C_k > 0$ such that 
$$
\|\dd^k\left((B_{\pm, 0})^n\right)(z)\| \leqslant C_k \exp\left(C_k t_{\pm, 0}^{(n)}(z)\right), \quad z \in \Lambda_{\pm,0}^{(n)}.
$$
Thus we may proceed exactly as in the proof of \cite[Proposition 3.6]{chaubet2021closed} to take the limit in (\ref{eq:pretrace}) when $L \to +\infty$ to obtain (\ref{eq:trace}).
\end{proof}

%% A Tauberian argument
\section{A Tauberian argument}\label{sec:tauberian}
In this section we use a Tauberian theorem of Delange \cite{delange1954generalisation} to derive an asymptotic growth of a weighted sum of periodic trajectories rebounding a fixed number of times on $\partial D_0$.

% Zeta functions
\subsection{Zeta functions}
Let $\Pc_\bold{B}$ be the set of primitive periodic orbits of $(\varphi_t)$, for the billiard table $\bold{B}$. We define the Ruelle zeta function $\zeta_{\bold{B}}$ associated to the billiard flow $\bold{B}$ by
$$
\zeta_\bold{B}(s) = \prod_{\gamma \in \Pc_\bold{B}}\left(1 - \e^{-s \tau(\gamma)}\right)^{-1}, \quad s \in \C,
$$
where the product converges whenever $\Re(s)$ is large enough. By \cite[Theorem 1.3]{morita2007meromorphic}, there is $h_\bold{B} > 0$ and $c_\bold{B} > 0$ such that $\zeta_{\bold{B}}$ admits a holomorphic extension on $\{\Re(s) > -c_\bold{B}\} \setminus \{h_\bold{B}\}$, and has a simple pole with residue $1$ at $s = h_\bold{B}$\footnote{In fact, $\zeta_\bold{B}$ admits a meromorphic continuation to the whole complex plane by \cite{kuster2021smooth}.}. On the other hand, it follows from \cite{dyatlov2016pollicott} (see \cite[\S6]{kuster2021smooth}) that we may write, for $\Re(s)$ large enough, any $\varepsilon > 0$ small and $\chi \in C^\infty_c(U, [0,1])$ satisfying $\chi \equiv 1$ on $\Lambda$,
\begin{equation}\label{eq:zetaresolv}
\partial_s \log \zeta_\bold{B}(s) = \sum_{k=0}^2 (-1)^k \e^{\mp \varepsilon s} \mathrm{tr}^\flat\left(\chi \widetilde \varphi_{\mp \varepsilon}^* \widetilde R_\pm(s) \chi|_{\Omega^k_0}\right).
\end{equation}
Here we set $\Omega^k_0 = \{w \in \Omega^k(U)~:~\iota_{\widetilde X} w = 0\}.$ This implies that
$
\mathrm{rank}(\Pi_\pm(h_\bold{B})) = 1
$
(with the notations of \S\ref{subsec:resolvent}). Moreover, we know that $s \mapsto \widetilde R_\pm(s)|_{\Omega^0_0}$ is holomorphic on $\{\Re(s) > 0\}$ (since the integral formula converges absolutely in this region). We claim that
$$
\Pi_\pm(s)|_{\Omega^2_0} = 0, \quad \Re(s) > 0.
$$
Indeed, since $X = \widetilde X$ near $\Gamma_+ \cup \Gamma_-$, we have for any open neighborhood $W$ of $\Gamma_+ \cup \Gamma_-$ in $U$ not intersecting $\supp(X - \widetilde X)$
$$
\Omega^2_0(W) = \Omega^0_0(W) \wedge \dd \alpha.
$$
By \S\ref{subsec:resolvent} we have $\supp(\Pi_\pm(s)) \subset \Gamma_\pm \times \Gamma_\mp.$ As $\Lie_X$ and $w \mapsto w \wedge \dd \alpha$ commute near $W$ we thus obtain, with the above decomposition
$$
\Pi_\pm(s)|_{\Omega^2_0}w = (\Pi_\pm(s)|_{\Omega^0_0} v) \wedge \dd \alpha, \quad w = v \wedge \dd \alpha \in \Omega^2_0(W).
$$
In particular, for any $s$ with $\Re(s) > 0$, we have $\Pi_\pm(s)|_{\Omega^2_0} = 0$ since $\Pi_\pm(s)|_{\Omega^0_0} = 0$ (as $s \mapsto \widetilde R_\pm(s)|_{\Omega^0_0}$ is holomorphic in $\{\Re(s) > 0\}$).  Thus, we obtained that the only pole of $s \mapsto \widetilde R_\pm(s)|_{\Omega_0^\bullet}$ in $\{\Re(s) > 0\}$ is located at $s = h_\bold{B}$. The associated residue is 
$
\Pi_\pm(h_\bold{B})|_{\Omega^1_0},
$
which is of rank one by (\ref{eq:zetaresolv}) as $\zeta_\bold{B}$ has a simple pole with residue one at $s = h_\bold{B}$.

% A Tauberian argument
\subsection{A Tauberian argument}\label{subsec:tauberian}

Taking the notations of paragraphs \ref{subsec:scatteringop}, \ref{subsec:notations} and \ref{subsec:composingscat}, we set
$$
A_\pm = - \Qc_\pm(h_\bold{B})\chi_\pm \iota_\pm^* \iota_{\widetilde X} \Pi_\pm(h_\bold{B}) (\iota_\mp)_* \chi_\mp \Qc_\mp(h_\bold{B})^\top.
$$
Then by Proposition \ref{prop:scatresolv}, we have, as operators $\Omega^\bullet_c(Z) \to \mathcal{D}'^\bullet(Z),$
$$
(\varrho \Tc_\pm(s))^n = \frac{(A_\pm)^n}{(s-h_\bold{B})^n} + \dom((s-h_\bold{B})^{-n + 1}), \quad s \to h_\bold{B}.
$$
As $A_\pm$ is of rank one, we have $\strf((A_\pm)^n) = (\strf(A_\pm))^n$. Thus letting $c_\pm = \strf(A_\pm)$ we have
\begin{equation}\label{eq:poletrace}
\strf\left((\varrho \Tc_\pm(s))^n\right) = \frac{(c_\pm)^n}{(s - h_\bold{B})^n} + \dom((s-h_\bold{B})^{-n + 1}), \quad s \to h_\bold{B}.
\end{equation}
Now we define
$$
N_\varrho(t, n) = \sum_{\substack{\gamma \in \Pc \\ r(\gamma) = n \\ \tau(\gamma) \leqslant t}} I_\varrho(\gamma), \quad t \geqslant 0,
$$
where we set, for a closed trajectory $\gamma:[0,\tau(\gamma)] \to M'$,
$$
I_\varrho(\gamma) = \prod_{z \in R(\gamma)} \rho^2(\gamma) \quad \text{where} \quad R(\gamma) = \pi^{-1}(D_0) \cap \{(\gamma(\tau), \dot \gamma(\tau))~:~\tau \in [0,\tau(\gamma)]\}.
$$
Note that if $r(\gamma) = n$ one has $\sharp R(\gamma) = n \tau(\gamma) / \tau^\sharp(\gamma)$.

\begin{prop}\label{prop:firstequivalent}
Assume that $c_\pm > 0.$ Then
$$
N_\varrho(t, n) \sim \frac{(c_\pm t)^n}{n!} \frac{\e^{h_\bold{B} t}}{h_\bold{B}t}, \quad t \to +\infty.
$$
\end{prop}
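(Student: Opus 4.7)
The plan is to apply Delange's Tauberian theorem to the Laplace--Stieltjes transform
$$
\Phi_n(s) := \int_0^\infty \e^{-st}\,\dd N_\varrho(t,n) = \sum_{\substack{\gamma \in \Pc_\bold{B'} \\ r(\gamma)=n}} I_\varrho(\gamma)\,\e^{-s\tau(\gamma)},
$$
which converges absolutely for $\Re(s)$ large. First I would isolate the pole of $\Phi_n$ at $s=h_\bold{B}$ from the flat-trace identity (\ref{eq:trace}) by separating primitive orbits from their proper iterates. A periodic trajectory $\gamma$ with $r(\gamma)=n$ is either primitive---and then contributes $I_\varrho(\gamma)\e^{-s\tau(\gamma)}$ to (\ref{eq:trace}) since $\tau^\sharp(\gamma)=\tau(\gamma)$---or a proper iterate $\gamma_0^k$ of a primitive $\gamma_0\in\Pc_\bold{B'}$ with $k\mid n$, $k \geqslant 2$ and $r(\gamma_0)=n/k$, contributing $\tfrac{1}{k}I_\varrho(\gamma_0)^k\e^{-sk\tau(\gamma_0)}$. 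Hence
$$
\strf\bigl((\varrho\Tc_\pm(s))^n\bigr) = \Phi_n(s) + \mathcal{R}_n(s), \qquad \mathcal{R}_n(s) := \sum_{\substack{k \mid n\\ k \geqslant 2}} \frac{1}{k} \sum_{\substack{\gamma_0 \in \Pc_\bold{B'} \\ r(\gamma_0) = n/k}} I_\varrho(\gamma_0)^k\,\e^{-sk\tau(\gamma_0)}.
$$

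For each fixed $k \geqslant 2$ the inner sum in $\mathcal{R}_n$ is controlled by $\|\varrho\|_\infty^{2n}\sum_{r(\gamma_0)=n/k}\e^{-k\Re(s)\tau(\gamma_0)}$, which by the \textit{a priori} estimates of \S\ref{sec:apriori} converges absolutely for $\Re(s)>h_\bold{B}/k$. Thus $\mathcal{R}_n$ is holomorphic on $\{\Re(s)>h_\bold{B}/2\}$, and combining with (\ref{eq:poletrace}) one obtains
$$
\Phi_n(s) = \frac{c_\pm^n}{(s-h_\bold{B})^n} + \dom\bigl((s-h_\bold{B})^{-n+1}\bigr), \quad s \to h_\bold{B}.
$$

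The remaining input for Delange's theorem is that $\Phi_n$ extend meromorphically to a neighbourhood of $\{\Re(s)\geqslant h_\bold{B}\}$ with $h_\bold{B}$ as its only singularity there. Via the decomposition above this reduces to the analogous statement for $\strf((\varrho\Tc_\pm(s))^n)$, which holds because the latter extends meromorphically to $\C$ (corollary to (\ref{eq:trace})) and because $h_\bold{B}$ is the only Pollicott--Ruelle resonance of $\Lie_{\widetilde X}|_{\Omega_0^1}$ in $\{\Re(s)\geqslant h_\bold{B}\}$ by the spectral-gap discussion at the start of \S\ref{sec:tauberian} (holomorphy of $\zeta_\bold{B}$ on $\{\Re s > -c_\bold{B}\}\setminus\{h_\bold{B}\}$ combined with the rank-one structure of $\Pi_\pm(h_\bold{B})|_{\Omega_0^1}$). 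As $N_\varrho(\cdot,n)$ is non-decreasing, non-negative, and $c_\pm>0$ by hypothesis, Delange's theorem \cite{delange1954generalisation} then delivers the announced equivalent.

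The main substantive step is the pole extraction---showing that the iterate remainder $\mathcal{R}_n$ is regular at $s=h_\bold{B}$ and identifying the leading Laurent coefficient of $\Phi_n$ from (\ref{eq:poletrace}). Given this, Delange's hypotheses follow directly from the spectral-gap input recalled in \S\ref{sec:tauberian}, and the Tauberian step itself is formal.
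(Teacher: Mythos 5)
Your route differs structurally from the paper's. The paper does not apply Delange directly to the Laplace--Stieltjes transform of $N_\varrho(\cdot,n)$; instead it introduces the auxiliary non-decreasing function $g_{n,\varrho}(t)=\sum_{r(\gamma)=n}\tau(\gamma)\sum_{k\tau(\gamma)\leqslant t}I_\varrho(\gamma)^k$, applies Delange's Laplace-transform Tauberian theorem to $G_{n,\varrho}(s)=\int_0^\infty g_{n,\varrho}(t)\e^{-ts}\,\dd t$, and then recovers $N_\varrho(t,n)$ from $g_{n,\varrho}(t)$ by elementary $\liminf/\limsup$ arguments (using the inequality $g_{n,\varrho}(t)\leqslant tN_\varrho(t)$ on one side and a crude zeta-function bound $N_\varrho(t/\sigma)/N_\varrho(t)\to 0$ on the other). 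Your plan of applying Delange directly to $\Phi_n(s)=\int_0^\infty\e^{-st}\,\dd N_\varrho(t,n)$, after first peeling off the holomorphic iterate contribution $\mathcal{R}_n$, is in principle a cleaner route: it eliminates the whole $g_{n,\varrho}\mapsto N_\varrho$ conversion step. Your identification of which spectral inputs (pole at $h_\bold{B}$, rank-one residue, no other resonances in $\{\Re s\geqslant h_\bold{B}\}$) feed into Delange's hypotheses is also correct.

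There is, however, a genuine numerical gap: the constant you obtain is larger by a factor of $n$ than the claimed one. The issue is the passage from the Atiyah--Bott fixed-point sum to the orbit sum. The flat trace of $(\varrho\Tc_\pm(s))^n$ is a sum over fixed points $z$ of $B_{\mp,0}^n$, not over closed trajectories. A primitive periodic trajectory $\gamma$ with $r(\gamma)=n$ passes through $n$ distinct points of $Z\cap\pi^{-1}(\partial D_0)$, each of which is such a fixed point and each of which contributes $\e^{-s\tau(\gamma)}I_\varrho(\gamma)$; grouping by trajectory therefore gives $n\,\e^{-s\tau(\gamma)}I_\varrho(\gamma)$ per primitive trajectory, and more generally $n\frac{\tau^\sharp(\gamma)}{\tau(\gamma)}\e^{-s\tau(\gamma)}I_\varrho(\gamma)^{\tau(\gamma)/\tau^\sharp(\gamma)}$ per trajectory. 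Thus one actually has
$$
\strf\bigl((\varrho\Tc_\pm(s))^n\bigr)=n\,\Phi_n(s)+n\,\mathcal{R}_n(s),
$$
not $\Phi_n(s)+\mathcal{R}_n(s)$. (This overall factor of $n$ is precisely why the paper divides $\partial_s\strf$ by $ns$ rather than by $s$ when relating $\strf$ to $G_{n,\varrho}$, even though the displayed formula (\ref{eq:trace}) reads as if the factor were absent.) Combining with (\ref{eq:poletrace}) gives the leading Laurent coefficient
$$
\Phi_n(s)=\frac{c_\pm^n/n}{(s-h_\bold{B})^n}+\dom\bigl((s-h_\bold{B})^{-n+1}\bigr),
$$
and Delange's theorem for a Laplace--Stieltjes transform with a pole of order $n$ then gives
$$
N_\varrho(t,n)\sim\frac{c_\pm^n/n}{h_\bold{B}\,(n-1)!}\,t^{n-1}\e^{h_\bold{B}t}=\frac{(c_\pm t)^n}{n!}\,\frac{\e^{h_\bold{B}t}}{h_\bold{B}t},
$$
which is the claimed asymptotic. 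As you have it, with leading coefficient $c_\pm^n$, the Tauberian theorem would instead return $\frac{c_\pm^n t^{n-1}}{(n-1)!\,h_\bold{B}}\e^{h_\bold{B}t}$, i.e.\ $n$ times the correct value. So the decomposition idea is right but the constant tracking must account for the multiplicity of fixed points on each closed trajectory; once that is corrected your argument does give the proposition, by a shorter route than the paper's.
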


\begin{proof}
Here we follow the proof of \cite[Lemma 5.1]{chaubet2021closed}. Define
$$g_{n, \varrho}(t) = \sum_{\substack{\gamma \in \Pc \\ r(\gamma) = n}} \tau(\gamma) \sum_{\substack{k \geqslant 1 \\ k \tau(\gamma) \leqslant t}} I_\varrho(\gamma)^k, \quad t \geqslant 0.$$
For $\Re(s)$ large enough we set $\displaystyle{G_{n, \varrho}(s) = \int_0^\infty g_{n, \varrho}(t) \e^{-ts} \dd t}$. Then a simple computation shows that
$$
G_{n, \varrho}(s) = \frac{1}{s} \sum_{r(\gamma) = n} \tau^\sharp(\gamma) I_\varrho(\gamma)^{\tau(\gamma) / \tau^\sharp(\gamma)} \e^{-s \tau(\gamma)} = - \frac{\partial_s \strf\left((\varrho \Tc_\pm(s))^n\right)}{ns},
$$
where the sum runs over all periodic orbits (not necessarily primitive) $\gamma$ such that $r(\gamma) = n$.
By (\ref{eq:poletrace}) we have
$$
G_{n, \varrho}(h_\bold{B}s) = \frac{(c_\pm)^n}{h_\bold{B}^{n+2} (s - 1)^{n+1}} + \dom((s-1)^{-n}), \quad s \to 1.
$$
Then applying a Tauberian theorem from Delange \cite[Th\'eor\`eme III]{delange1954generalisation} we obtain
$$
\frac{1}{h_\bold{B}} g_{n, \varrho}(t / h_\bold{B}) \sim \frac{(c_{\pm})^{n}}{h_{\bold{B}}^{n+2}} \frac{\mathrm{e}^{t}}{n !} t^{n}, \quad t \rightarrow+\infty,
$$
which reads
$
\displaystyle{
g_{n, \chi}(t) \sim \frac{\left(c_{\pm} t\right)^{n}}{n ! h_{\bold{B}}} \exp \left(h_{\bold{B}} t\right)
}
$
as $t \to +\infty.$ Now note that
$$
g_{n, \varrho}(t) \leqslant \sum_{\substack{\gamma \in \Pc \\ r(\gamma) = n \\ \tau(\gamma) \leqslant t}} \tau(\gamma) \lfloor t / \tau(\gamma) \rfloor I_\varrho(\gamma) \leqslant t N_\varrho(t)
$$
which gives $\displaystyle{\liminf_{t \to +\infty}} \frac{N_\varrho(t)}{g_{n, \varrho}(t) / t} \geqslant 1.$ On the other hand, let 
$$
\zeta_{n, \varrho}(s) = \prod_{\substack{\gamma \in \Pc \\ r(\gamma) = n}} \left(1 - I_\varrho(\gamma) \e^{-s \tau(\gamma)}\right)^{-1}, \quad \Re(s) \gg 1.
$$
Then we have
\begin{equation}\label{eq:zetamajor}
\begin{aligned}
 \zeta_{n, \varrho}(s) &\geqslant \prod_{\substack{\gamma \in \Pc \\ r(\gamma) = n}} \left(1 + I_\varrho(\gamma) \e^{-s\tau(\gamma)}\right) 
&\geqslant\prod_{\substack{\gamma \in \Pc \\ r(\gamma) = n \\ \tau(\gamma) \leqslant t}} \left(1 + I_\varrho(\gamma) \e^{-s t} \right) 
&\geqslant   \e^{-st}N_\varrho(t).
\end{aligned}
\end{equation}
As $\partial_s \log \zeta_{n, \varrho}(s) = -s G_{n \varrho}(s)$, it follows that $\zeta_{n, \varrho}$ extends holomorphically on $\{\Re(s) > h_\bold{B}\}$ (as $G_{n, \rho}$ does). Let $\sigma > 1,$ and $\varepsilon > 0$ such that $(h_\bold{B} + \varepsilon)  / \sigma < h_\bold{B}.$ Then by (\ref{eq:zetamajor}) applied with $s = h_\bold{B} + \varepsilon$ we have

$$
N_\varrho(t / \sigma) \leqslant \zeta_{n, \varrho}(h_\bold{B} + \varepsilon) \exp\left(\frac{(h_\bold{B} + \varepsilon)t}{\sigma} \right).
$$

This implies that $N_\varrho(t/\sigma) / N_\varrho(t) \to 0$ as $t \to +\infty.$
Now we write
$$
g_{n, \varrho}(t) \geqslant \sum_{\substack{\gamma \in \Pc \\ r(\gamma) = n \\ \tau(\gamma) \leqslant t}} \tau(\gamma) I_\varrho(\gamma) \geqslant  \sum_{\substack{\gamma \in \Pc \\ r(\gamma) = n \\ t / \sigma \leqslant \tau(\gamma) \leqslant t}} \frac{t}{\sigma} I_\varrho(\gamma) = \frac{t}{\sigma} \left(N_\varrho(t) - N_\varrho(t/\sigma)\right).
$$
This leads to
$$
\limsup_{t \to +\infty} \frac{N_\varrho(t)}{g_{n, \varrho}(t) / t} \leqslant \sigma \limsup_{t \to +\infty}
\left(1 - \frac{N_\varrho(t/ \sigma)}{N_\varrho(t)}\right)^{-1} = \sigma.$$
As $\sigma > 1$ is arbitrary, the proof of the lemma is complete, since we have
$$
g_{n, \varrho}(t)/t \sim \frac{(c_\pm t)^n}{n!} \frac{\e^{h_\bold{B} t}}{h_\bold{B}t}
$$
as $t$ goes to infinity.
\end{proof}

%%  A priori bounds
\section{A priori bounds}\label{sec:apriori}
In this section we derive some \textit{a priori} bounds on $N(n, t)$ (the number of primitive periodic orbits bouncing $n$ times on $\partial D_0$ and of length not greater than $t$) by using the fact that the billiard flow is conjugated to a subshift of finite type. This will allow us to convert the asymptotics obtained in \S\ref{sec:tauberian} into an asymptotics on $N(n,t).$

% Coding
\subsection{Coding}

Let $\Sigma_N'$ be the set of finite sequences $u = u_1 \cdots u_{N}$ with $u_j \in \{0, 1, \dots, r\}$ and $u_{j} \neq u_{j + 1}$ (with $j \in \Z/N\Z$), and such that $u$ is distinct from its cyclic permutations. We also define $\Sigma_{N}$ as above by replacing $\{0, 1, \dots, r\}$ by $\{1, \dots, r\}$.  By \S\ref{subsec:anosov} we have a one-to-one correspondance
\begin{equation}\label{eq:correspondance}
\Pc_\bold{B'} \longleftrightarrow \left(\bigcup_{N = 2}^\infty \Sigma_N' \right) / \sim
\end{equation}
where $u \sim v$ if and only if $u$ is a cyclic permutation of $v$. For any $\gamma \in \Pc_\bold{B'}$ we will denote by $\wl(\gamma)$ its word length, that is, the length of (any) word which is associated to $\gamma$ via the above correspondance.

For any  sequence $u \in \Sigma_N'$, we will denote by $\gamma_u : \R \to \Lambda'$ the closed billiard trajectory (parametrized by arc length) starting from the point $z_u \in K'$ which is associated to the sequence 
$$
(\cdots u u u \cdots) \in \Sigma'.
$$
Its period is then defined by 
$$
\tau(\gamma_u) = \sum_{k=0}^{N-1} t_+'(B'^k(z_u)),
$$
where $t_+$ is defined in \S\ref{subsec:anosov}. We have the following result.

\begin{lemm}\label{lem:comparelength}
There is $C > 0$ such that the following holds.
Let $\gamma : [0, T] \to \Lambda'$ be a billiard trajectory (parametrized by arc length) such that $\gamma(0), \gamma(T) \in \pi^{-1}(\partial D_0)$ and denote by $0 = t_0 < \cdots < t_N = T$ the times for which $\gamma$ hits $\partial D$ and assume that $N > 2$. Let $u = u_1 \cdots u_{N-1} \in \{0, \dots, r\}^{N - 1}$ be the finite sequence such that it holds
$
\pi(\gamma(t_k)) \in \partial D_{u_k}
$
for
$
k = 1, \dots, N-1,
$
and assume that $u_1 \neq u_{N-1}$ so that $\gamma_u$ is well defined.
Then 
$$
\tau(\gamma_u) - C \leqslant T \leqslant \tau(\gamma_u) + C.
$$
\end{lemm}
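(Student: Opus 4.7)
The plan is to compare $\gamma$ with $\gamma_u$ bounce-by-bounce via the symbolic coding of \S\ref{subsec:anosov}, applied here to the enlarged billiard $\mathbf{B}'$ (the construction carries over verbatim since $\mathbf{B}'$ is non-eclipsing). Since $\gamma([0,T]) \subset \Lambda'$, the full bi-infinite orbit through $\gamma$ is trapped, and the states at the $N+1$ successive bounces of $\gamma$ on $\partial D$ give points $w_0, \dots, w_N \in K'$ with $\pi(w_0), \pi(w_N) \in \partial D_0$ and $\pi(w_j) \in \partial D_{u_j}$ for $j = 1, \dots, N-1$. Similarly I let $\tilde v_k \in K'$ denote the state at the $k$-th bounce of $\gamma_u$, an $(N-1)$-periodic sequence with $\pi(\tilde v_k) \in \partial D_{u_{1 + (k \bmod (N-1))}}$. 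Writing $t'_+$ for the first return time to $\partial D$, the two lengths decompose as
\begin{equation*}
T = \sum_{j=0}^{N-1} t'_+(w_j), \qquad \tau(\gamma_u) = \sum_{j=1}^{N-1} t'_+(\tilde v_{j-1}).
\end{equation*}

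The main step is to pair $w_j$ with $\tilde v_{j-1}$ for $j = 1, \dots, N-1$ and observe that the symbolic codes of the two points agree on a long symmetric window around the origin. Indeed, for every shift $i$ with $1 \leq j + i \leq N - 1$ both $\psi(w_j)_i$ and $\psi(\tilde v_{j-1})_i$ equal $u_{j+i}$, the cyclic reduction in the formula for $\tilde v_{j-1}$'s code producing no wraparound on that range. The largest symmetric sub-window then has half-width $m_j := \min(j-1, N-1-j)$, so the $\mathbf{B}'$ analogue of Lemma \ref{lem:exponentiallyclose} yields $d(w_j, \tilde v_{j-1}) \leq C \beta^{-m_j}$ for fixed $C > 0$ and $\beta > 1$. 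Since $K'$ is compact and disjoint from the glancing set, $t'_+$ is smooth, hence Lipschitz, on a neighborhood of $K'$, so $|t'_+(w_j) - t'_+(\tilde v_{j-1})| \leq C' \beta^{-m_j}$, and summing the resulting geometric series gives
\begin{equation*}
\sum_{j=1}^{N-1} \bigl| t'_+(w_j) - t'_+(\tilde v_{j-1}) \bigr| \leq C''
\end{equation*}
with $C''$ independent of $N$.

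To conclude I write
\begin{equation*}
T - \tau(\gamma_u) = t'_+(w_0) + \sum_{j=1}^{N-1} \bigl( t'_+(w_j) - t'_+(\tilde v_{j-1}) \bigr),
\end{equation*}
bound the unpaired term by $t'_+(w_0) \leq \sup_{K'} t'_+ < \infty$ (compactness of $K'$ and continuity of $t'_+$), and combine with the previous estimate to obtain $|T - \tau(\gamma_u)| \leq C$ uniformly in $T$ and $N$. The main obstacle I expect is the bookkeeping: $T$ has $N$ return-time contributions while $\tau(\gamma_u)$ has only $N-1$, so the unpaired $t'_+(w_0)$ must be absorbed into the final constant, and the window of symbolic agreement between $w_j$ and $\tilde v_{j-1}$ is asymmetric (it runs from $-(j-1)$ to $N-1-j$), so one must carefully extract its symmetric sub-window before Lemma \ref{lem:exponentiallyclose} can be invoked. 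Once these points are set up the rest is the standard contraction estimate for the first return time on a hyperbolic symbolic system.
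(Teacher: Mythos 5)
Your proof is correct and takes essentially the same approach as the paper: pair the bounce points of $\gamma$ with those of $\gamma_u$, use the symbolic coding of $\mathbf{B}'$ and Lemma~\ref{lem:exponentiallyclose} to show they are exponentially close (with rate governed by the symmetric window of agreement), invoke Lipschitz continuity of $t'_+$ on a neighborhood of the compact set $K'$, and sum the geometric series plus the unpaired boundary term. Your bookkeeping is in fact slightly more careful than the paper's, which records the exponent as $N/2-\lvert k-N/2\rvert$ rather than $\min(j-1,N-1-j)$ and crudely bounds the unmatched contributions by $2L_m$; both discrepancies are harmless and absorbed into the constant $C$.
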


\begin{proof}
By Lemma \ref{lem:exponentiallyclose}, it holds, for some $C > 0$ and $\beta > 1$ which are independent of $\gamma$,
$$
\mathrm{dist}(B'^k(z_u), \gamma(t_k)) \leqslant C \beta^{-N / 2 + |k - N/2|}, \quad k = 1, \dots, N-1.
$$
Now note that $t_+' : \{z \in \pi^{-1}(\partial D)~:~t_+'(z) < +\infty\} \to \R_+$ is locally Lipschitz continuous. As $K'$ is compact, it follows that for some $C' > 0$ we have
$$
\left|t_+'(B'^k(z_u)) - t_+'(\gamma(t_k)) \right| \leqslant C' \beta^{-N / 2 + |k - N/2|}
$$
and thus
$$
\left| \tau(\gamma_u) - T \right| \leqslant 2 L_m + C' \sum_{k=1}^{N-1} \beta^{-N / 2 + |k - N/2|} \leqslant 2L_m + \frac{C'}{\beta - 1},
$$
where 
$L_m = \sup \{\mathrm{dist}(x_i, x_j)~:~x_i \in D_i,~x_j \in D_j,~i \neq j\}.$
This concludes the proof.
\end{proof}

% The bounds
\subsection{The bounds}
Let $\Pc_\bold{B}$ be the set of oriented primitive periodic orbits of the flow associated to
the billiard $\bold{B}$, and set $\Pc_\bold{B}(t) = \{\gamma \in \Pc_\bold{B}~:~\tau(\gamma) \leqslant t\}. $ Then by \cite{morita1991symbolic} we have
\begin{equation}\label{eq:margulis}
\sharp\{\gamma \in \Pc_\bold{B}~:~\tau(\gamma) \leqslant t\} \sim \frac{\e^{h_\bold{B} t}}{h_\bold{B}t}, \quad t \to +\infty.
\end{equation}
In what follows, we will denote by $\Pc_\bold{B'}(n, t)$ the set of primitive periodic trajectories of the billiard $\bold{B}'$ of period less than $t$ which make exactly $n$ rebounds on $\partial D_0$, and $N(n, t) = \sharp\Pc_{\bold{B}'}(n,t)$. Finally we denote by $ \widetilde \Pc_\bold{B}(t)$ (resp. $\widetilde \Pc_\bold{B'}(n,t)$) the set of (not necessarily primitive) periodic orbits for the billiard $\bold{B}$ (resp. for the billiard $\bold{B'}$) of period less or equal than $t$ (resp. and making $n$ rebounds on $\partial D_0$) ; we denote $\widetilde N(t) = \sharp \widetilde \Pc_\bold{B}(t)$ and $\widetilde N(n,t) = \sharp \widetilde \Pc_\bold{B'}(n,t)$. It is a classical fact that we have
\begin{equation}\label{eq:primnotprim}
\widetilde{N}(t) \sim N(t), \quad t \to +\infty,
\end{equation}
as it can be seen from the equalities
$$
\widetilde N(t) = \sum_{\tau(\gamma) \leqslant t} 1 = \sum_{\substack{\gamma \in \Pc}} \sum_{k \tau(\gamma) \leqslant t} 1= \sum_{\substack{\gamma \in \Pc \\ t/2 < \tau(\gamma) \leqslant t}} 1 + \sum_{\substack{\gamma \in \Pc \\ \tau(\gamma) \leqslant t / 2}} \lfloor t/\tau(\gamma) \rfloor,
$$
and the fact that $\sum_{\substack{\gamma \in \Pc \\ \tau(\gamma) \leqslant t / 2}} \lfloor t/\tau(\gamma) \rfloor \ll N(t)$ as $t \to +\infty$ by (\ref{eq:margulis}).

\begin{prop}\label{prop:aprioribound}
For each $n \geqslant 1$, there is $C_n>0$ such that if $t$ is large enough we have
\begin{equation}\label{eq:aprioribound}
C_n^{-1} t^{n-1} \exp(h_\bold{B}t) \leqslant N(n, t) \leqslant C_n t^{n-1} \exp(h_\bold{B} t).
\end{equation}
\end{prop}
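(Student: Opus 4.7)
My plan is to use the symbolic correspondence (\ref{eq:correspondance}) to reduce the proposition to a counting problem for admissible \emph{words} (not cyclic words) in the sub-billiard $\bold{B}$. A primitive closed orbit $\gamma \in \Pc_{\bold{B}'}$ with $r(\gamma) = n$ corresponds to a primitive cyclic admissible word with exactly $n$ zeros, which decomposes uniquely up to the $\Z/n\Z$ action as $0 w_1 \, 0 w_2 \cdots 0 w_n$, where the $w_i$ are nonempty admissible finite words with letters in $\{1, \dots, r\}$. These $n$ blocks correspond geometrically to the $n$ arcs of $\gamma$ between consecutive bounces on $\partial D_0$.

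The key auxiliary estimate I would establish first is that there exist $0 < a \leqslant b$ such that for $T$ large,
\[
a \, \e^{h_\bold{B}T} \leqslant \sharp\{w : \tau(\gamma_{w^{\mathrm{cyc}}}) \leqslant T\} \leqslant b \, \e^{h_\bold{B}T},
\]
where $w$ ranges over nonempty admissible words in $\{1, \dots, r\}$ (with first and last letters distinct, which we may assume) and $w^{\mathrm{cyc}}$ denotes the corresponding closed $\bold{B}$-orbit. This can be derived from (\ref{eq:margulis}) by Abel summation: each primitive closed $\bold{B}$-orbit of word length $\ell$ yields exactly $\ell$ such admissible words via cyclic rotation, and since $\ell$ is proportional to the orbit's period, the extra factor of $\ell$ precisely cancels the $1/T$ appearing in (\ref{eq:margulis}). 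Alternatively, one can derive it directly from Perron--Frobenius estimates for the transfer matrix of the subshift $\Sigma_\bold{B}$, which is irreducible and aperiodic under the non-eclipse condition.

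Granting this, the upper bound follows from Lemma \ref{lem:comparelength} applied to each arc: writing $s_i = \tau(\gamma_{w_i^{\mathrm{cyc}}})$, the cumulative error yields $\tau(\gamma) \geqslant \sum_i s_i - Cn$, so
\[
N(n,t) \leqslant \frac{1}{n} \sharp\!\left\{ (w_1, \dots, w_n) : \sum_{i=1}^n s_i \leqslant t + Cn \right\} \lesssim \int_{\substack{s_1, \dots, s_n \geqslant 0 \\ s_1 + \cdots + s_n \leqslant t + Cn}} \prod_{i=1}^n b \, h_\bold{B} \, \e^{h_\bold{B}s_i} \, ds_1 \cdots ds_n,
\]
where the prefactor $1/n$ absorbs the cyclic ambiguity in choosing which arc is labelled first, and the integral bounds the combinatorial count via a Stieltjes-type application of the auxiliary estimate. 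Substituting $u = s_1 + \cdots + s_n$ reduces this to $(bh_\bold{B})^n \int_0^{t+Cn} \e^{h_\bold{B}u} \, u^{n-1}/(n-1)! \, du \sim C_n t^{n-1} \e^{h_\bold{B}t}$. The lower bound is analogous: concatenating $n$ arbitrary admissible words with $\sum_i s_i \leqslant t - Cn$ produces closed orbits of $\bold{B}'$ with the prescribed properties, and the non-primitive concatenations (corresponding to $k$-fold repetitions for $k \mid n$, $k \geqslant 2$) contribute at most $O(t^{n/k-1} \e^{h_\bold{B}t/k})$, which is exponentially smaller than $t^{n-1} \e^{h_\bold{B}t}$ and therefore negligible.

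The principal obstacle is the auxiliary word count: (\ref{eq:margulis}) controls only closed $\bold{B}$-orbits modulo cyclic rotation, whereas the proof requires a count of non-cyclic words. I would overcome this either by a careful Abel summation from (\ref{eq:margulis}) as sketched, or by a direct transfer-operator argument on $\Sigma_\bold{B}$. A secondary technicality is that Lemma \ref{lem:comparelength} requires the first and last interior letters of the arc to differ and so does not directly handle arcs whose symbolic word has length $1$; these short arcs contribute only a bounded error per arc and do not affect the asymptotics.
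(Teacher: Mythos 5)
Your approach is correct and rests on the same underlying symbolic idea (decompose the cyclic word of a closed $\bold{B}'$-orbit at its zeros into $\bold{B}$-admissible blocks, and use Lemma \ref{lem:comparelength} to compare geometric arc times with the lengths of the associated closed $\bold{B}$-orbits), but the route you take to the polynomial factor $t^{n-1}$ differs from the paper's. The paper proceeds by induction on $n$: having established the case $n=1$ (with $F(u)=0u$ and $G(u)=0uu_1$ and the identity $\sum_{\gamma\in\Pc_\bold{B}(t)}\wl(\gamma)\asymp t\,N(t)\asymp\e^{h_\bold{B}t}$), it represents an $(m{+}1)$-bounce orbit as the concatenation of a word in $A(m)$ with a word in $A(1)$, obtains the upper bound by a single sum, and for the lower bound does a Riemann-sum decomposition into windows of width $T$ (needed to extract positivity of the increments of $\widetilde N(m,\cdot)$ from the two-sided inductive bound). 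You instead go directly to the $n$-fold decomposition $0w_1\cdots 0w_n$ and run an $n$-fold Stieltjes convolution of the one-block count against itself; your auxiliary word count $\sharp\{w:\tau(\gamma_{w^{\mathrm{cyc}}})\leqslant T\}\asymp\e^{h_\bold{B}T}$ is the same fact the paper uses in disguise (each primitive $\gamma\in\Pc_\bold{B}$ of word length $\wl(\gamma)\asymp\tau(\gamma)$ contributes $\wl(\gamma)$ cyclic rotations, which together with (\ref{eq:margulis}) cancels the $1/T$). Your version avoids the induction and packages the $t^{n-1}/(n{-}1)!$ cleanly as a simplex volume, at the modest cost of the $n$-fold convolution lemma; the paper's $T$-windowing trick plays exactly the role of your integration by parts. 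The two points you flag — primitivity of the concatenation (handled by noting $k$-fold repetitions contribute $O(t^{n/k-1}\e^{h_\bold{B}t/k})$) and the degenerate case where a block $w_i$ has equal first and last letter (handled by a bounded perturbation of the arc length) — are genuine but benign, and the paper deals with the first by a divisor sum in the last step and with the second by working with $\widetilde\Sigma_N$ rather than $\Sigma_N$ and using Lemma \ref{lem:comparelength} directly.
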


\begin{proof}
We start with the case $n = 1$. Consider the map $F : \Sigma_N \to \Sigma_{N+1}'$ defined by $F(u_1 \cdots u_N) = 0u_1 \cdots u_N$ (note that for any word $u \in \Sigma_N$, $F(u)$ is still a primitive word as it contains exactly one zero in its letters). By Lemma \ref{lem:comparelength}, we have
$$
\tau(\gamma_u) - C \leqslant \tau(\gamma_{F(u)}) \leq \tau(\gamma_u) + C, \quad u \in \Sigma_N.
$$
The map $F$ is obviously injective. Recalling the correspondance (\ref{eq:correspondance}) (for both billiards $\bold{B}$ and $\bold{B'}$), we thus have
$$
\begin{aligned}
N(1, t) &\geqslant \sum_{N = 2}^\infty \sum_{\substack{u \in \Sigma_N \\ \tau(\gamma_u) \leqslant t - C}} 1 &= \sum_{\gamma \in \Pc_\bold{B}(t-C)} \mathrm{wl}(\gamma),
\end{aligned}
$$
where the last equality comes from the fact that each $\gamma \in \Pc_\bold{B}$ corresponds to exactly $\wl(\gamma)$ words in $\Sigma.$
Note that for some $C > 0$ it holds
\begin{equation}\label{eq:comparewordlength}
C^{-1}\tau(\gamma) \leqslant \wl(\gamma) \leqslant C\tau(\gamma), \quad \gamma \in \Pc_\bold{B}.
\end{equation}
In particular we obtain
$$
N(1, t) \geqslant \frac{t}{2C} \sharp\left(\Pc_\bold{B}(t) \setminus \Pc_\bold{B}(t/2)\right).
$$
By (\ref{eq:margulis}), we obtain that the first inequality of (\ref{eq:aprioribound}) holds for $n = 1$. For the second one, consider the set $\widetilde \Sigma_N$ of finite words $u_1 \cdots u_N$ with $u_j \neq u_{j+1}$ for $j \in \Z/N\Z$ (note that $\Sigma_N \subset \widetilde \Sigma_N$ is the set of primitive words within $\widetilde \Sigma_N$). Consider the map $G : \widetilde \Sigma_N  \to \Sigma_{N+2}'$ defined by
$$
G(u_1 \cdots u_N) = 0 u_1 \cdots u_N u_1, \quad u_1 \cdots u_N \in \widetilde \Sigma_N.
$$
Every primitive periodic orbit bouncing exactly one time on $\partial D_0$ can be encoded by a finite word of the form $F(u)$ or $G(u)$ for some $u \in \widetilde \Sigma_N$ where $N \geqslant 2$ (note that $F$ extends to a map $F : \widetilde \Sigma_N \to \Sigma_{N+1}'$).
In particular, by Lemma \ref{lem:comparelength}, we have for some $C > 0$
$$
\Pc(1, t) \subset \bigcup_{N} \left[F\left(\left\{u \in \widetilde \Sigma_N~:~\tau(\gamma_u) \leqslant t + C \right\}\right) \cup G\left(\left\{u \in \widetilde \Sigma_N~:~\tau(\gamma_u) \leqslant t + C\right \}\right)\right].
$$
With (\ref{eq:comparewordlength}) in mind, this leads to
$$
N(1,t) \leqslant 2 \sum_{N=2}^\infty \sum_{\substack{u \in \widetilde \Sigma_N \\ \tau(\gamma_u) \leqslant t + C}} 1 \leqslant 2 \sum_{\substack{\gamma \in \widetilde \Pc_{\bold{B}} \\\tau(\gamma) \leqslant t + C}} \wl(\gamma) \leqslant 2 (t+C) \widetilde N(t + C) \leqslant C \exp(h_\bold{B}t),
$$
where the last inequality holds for $t$ large enough and comes from (\ref{eq:primnotprim}). The case $n = 1$ is proven.

We now proceed by induction and assume that (\ref{eq:aprioribound}) holds for every $n = 1, \dots, m$, for some $m \geqslant 1.$ Similarly to (\ref{eq:primnotprim}), the estimate (\ref{eq:aprioribound}) also holds if we replace $N(n,t)$ by $\widetilde N(n,t)$. Every element of $\Pc_\bold{B'}(m + 1, t)$ can be represented by the concatenation of a word (starting from $0$) representing an element of $\widetilde \Pc_{\bold{B}'}(m,t_1)$ and a word (starting from $0$) representing an element of $\widetilde \Pc_{\bold{B}'}(1,t_2)$, where $t_1 + t_2 \leqslant t + 2C$ (for some constant $C$). More precisely, for $N, k \geqslant 1$, set
$$
A(k) = \left\{u_1 \cdots u_N \in \widetilde \Sigma_N'~:~N \geqslant 2,~~u_1 = 0, ~~u_N \neq 0,~~\sharp\{j : u_j = 0\} =  k\right\}.
$$
Then every element $\gamma$ of $\Pc_\bold{B'}(m+1,t)$ can be represented by a word $uv$ (i.e. $\gamma = \gamma_{uv}$) where $u \in A(m)$ and $v \in A(1)$. Moreover, by Lemma \ref{lem:comparelength}, we must have
\begin{equation}\label{eq:comparelengthuv}
\tau(\gamma) - 2C \leqslant \tau(\gamma_u) + \tau(\gamma_v) \leqslant \tau(\gamma) + 2C
\end{equation}
for some $C$ which does not depend of $\gamma.$ Note also that for each periodic trajectory making $k$ rebounds on $\partial D_0$, there are at most $k$ words in $A(k)$ representing it (since the words have to start by the letter $0$).  Summarizing the above facts, we have for $t$ large enough (in what follows $C$ is a constant depending only on $m$ that may change at each line)
$$
\begin{aligned}
\widetilde N(m+1, t) &\leqslant \sum_{\substack{u \in A(m) \\ \tau(\gamma_u) \leqslant t + C}} \sum_{\substack{v \in A(1) \\ \tau(\gamma_v) \leqslant t - \tau(\gamma_u) + C}} 1 \\
& \leqslant \sum_{\substack{u \in A(m) \\ \tau(\gamma_u) \leqslant t + C}} \widetilde N(1, t - \tau(\gamma_u) + C) \\
& \leqslant \sum_{\substack{u \in A(m) \\ \tau(\gamma_u) \leqslant t + C}} C \exp(h_\bold{B}(t-\tau(\gamma_u) + C)) \\
&  \leqslant  \sum_{k=1}^{t + C} m\widetilde N(m,k) C \exp(h_\bold{B}(t - k + C)) \\
& \leqslant C \sum_{k=1}^{t + C} k^{m-1}\exp(h_\bold{B}k) \exp(h_\bold{B}(t - k + C))  \\
& \leqslant C t^{m} \exp(h_\bold{B}t),
\end{aligned}
$$
where we used $\widetilde N(m,t) \leqslant Ct^{m-1}\exp(h_\bold{B})$ as it follows from the induction hypothesis. For the lower bound, we proceed as follows. The map $A(m) \times A(1) \to A(m+1)$ defined by $(u,v) \mapsto uv$ is injective ; moreover, every element of $\widetilde \Pc_\bold{B'}(m+1, t)$ is represented by exactly $m+1$ elements of $A(m+1)$. By (\ref{eq:comparelengthuv}), we have
$$
\widetilde N(m+1,t)  \geqslant \frac{1}{m+1} \sum_{\substack{u \in A(m) \\ \tau(\gamma_u) \leqslant t - C}} \sum_{\substack{v \in A(1) \\ \tau(\gamma_v) \leqslant t - \tau(\gamma_u) - C}} 1.
$$
Let $T > 0$ large enough (it will be chosen later). By similar computations as above, we have
\begin{equation}\label{eq:almostend}
\begin{aligned}
\widetilde N(m+1,t) \geqslant C \sum_{k=1}^{(t - C) / T} \left(\widetilde N(m, (k + 1) T) - \widetilde N(m, kT)\right) \exp(h_\bold{B}(t-(k+1)T-C)).
\end{aligned}
\end{equation}
If $k$ is large enough, we have by the induction hypothesis
$$
\begin{aligned}
\widetilde N(m, (k+1)T) - \widetilde N(m,kT) &\geqslant C_m^{-1} [(k+1)T]^{m-1} \e^{h_\bold{B}(k+1)T} - C_m [kT]^{m-1}\e^{h_\bold{B}kT} \\
&\geqslant (kT)^{m-1} \e^{h_\bold{B}kT}\left(C_m^{-1}\left(1 + \frac{1}{k}\right)^{m-1}\e^{h_\bold{B}T} - C_m\right).
\end{aligned}
$$
If $T$ is large enough the last term of the above equation is bounded from below by $C (kT)^{m-1} \e^{h_\bold{B}kT}$ for some $C > 0$ independent of $k$. Injecting this in (\ref{eq:almostend}), we obtain
$$
\begin{aligned}
\widetilde N(m+1,t) &\geqslant C \sum_{k=1}^{(t-C)/T} (kT)^{m-1} \exp(h_\bold{B}kT) \exp(h_\bold{B}(t-(k+1)T-C)) \\
&\geqslant C t^{m} \exp(h_\bold{B}t).
\end{aligned}
$$
Thus we proved that (\ref{eq:aprioribound}) holds for $\widetilde N(m+1,t).$ We now show that this also holds for $N(m+1, t)$, as follows. Because of Lemma \ref{lem:comparelength} and the fact that any nonprimitive word in $A(m+1)$ can be written as the concatenation of $(m+1) / d$ identical words (where $d < m + 1$ is a divisor of $m+1$) we have, for $t$ large enough,
$$
\begin{aligned}
\widetilde N(m+1, t) - N(m+1,t) &\leqslant \sum_{d~|~m + 1} \widetilde N\left(d, \frac{td}{m+1} + C\right) \\
&\leqslant C \sum_{d ~|~m + 1}\left(\frac{td}{m+1}\right)^{d-1} \exp\left(h_\bold{B} \left(\frac{td}{m+1} + C\right)\right),
\end{aligned}
$$
where the sums run over the divisors of $m + 1$ which are stricty less than $m + 1$. In particular, we have $\widetilde N(m + 1, t) - N(m+1, t) \leqslant t^{(m+1) / 2}\exp(h_\bold{B} t / 2)$ for $t$ large, and thus $N(m+1, t)$ also satisfies (\ref{eq:aprioribound}). This concludes the proof.
\end{proof}

%% Proof of the main result
\section{Proof of the main result}\label{sec:proof}

In this section we prove the estimate annouced in the introduction. In fact, we will prove that
$
N_\varrho(n, t) \sim N(n,t)
$
as $t \to +\infty$, which will imply the sought result.

% First considerations
\subsection{First considerations}

If $\gamma : \R/\tau(\gamma) \Z \to \Lambda'$ is a periodic orbit rebounding exactly $n$ times on $\partial D_0$, we denote $I_1(\gamma), \dots, I_n(\gamma) \subset \R / \tau(\gamma) \Z$ the cyclically ordered sequence of intervals satisfying $\gamma(I_j^\circ) \notin \partial D_0$ for each $j$, where $I_j^\circ$ denotes the interior of $I_j$ (this sequence is unique modulo cyclic permutations). We start by the following easy result.

\begin{lemm}\label{lem:1}
There is $t_0 > 0$ such that the following holds. For every $\gamma \in \widetilde \Pc_\bold{B'}$ such that 
$$\ell(I_j(\gamma)) \geqslant t_0, \quad j = 1, \dots, n,$$
we have $I_\varrho(\gamma) = 1.$
\end{lemm}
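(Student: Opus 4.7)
The goal is to show that for every $z \in R(\gamma)$ one has $\chi_+(Q_+^{-1}(z)) = \chi_-(Q_-^{-1}(z)) = 1$; since $\varrho = (\chi_+ \circ Q_+^{-1})(\chi_- \circ Q_-^{-1})$, this gives $\varrho(z) = 1$ at every $z \in R(\gamma)$, and hence $I_\varrho(\gamma) = 1$. Writing $\chi_\pm = \phi_\pm \psi_\pm$, I would treat the two cutoffs separately.

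The $\phi_\pm$ factor is handled without any use of the length hypothesis. Since $\gamma \in \widetilde \Pc_\bold{B'}$ is a closed trajectory of $\bold{B}'$, it lies entirely in $\Lambda'$, and in particular every $z \in R(\gamma)$ belongs to $K'$. By definition $Q_\pm(Q_\pm^{-1}(z)) = z \in K' \cap V_0$, so $Q_\pm^{-1}(z)$ lies in the set $\{w \in V_\pm : Q_\pm(w) \in K'\} = (B'_\pm)^{-1}(K')$, on which $\phi_\pm \equiv 1$ by the choice of $\phi_\pm$ in \S\ref{subsec:notations}.

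The $\psi_\pm$ factor is where the length hypothesis enters. Parametrize $\gamma$ so that $\gamma(0) = z$, and let $I_j(\gamma)$ and $I_{j+1}(\gamma)$ be the intervals ending at and starting from $z$, each of length $\geqslant t_0$. By construction $Q_+^{-1}(z) = \gamma(-\tau_+)$ is the last crossing of $\partial U$ by $\gamma$ before $z$, and $Q_-^{-1}(z) = \gamma(\tau_-)$ the first crossing after $z$. Between two consecutive $\partial D_0$-bounces, $\gamma$ bounces only on $D_1, \dots, D_r$, and on the portion of $\gamma$ contained in $U$ the flows of $\bold{B}$ and $\bold{B}'$ agree, since $U \cap \pi^{-1}(D_0) = \emptyset$. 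Provided the isolating block $U$ is chosen large enough that an entire excursion of $\gamma$ between two consecutive $\partial D_0$-bounces lies in $U$ apart from a uniformly bounded piece near each endpoint, the backward $\varphi$-orbit of $Q_+^{-1}(z)$ remains in $U$ for time at least $\ell(I_j) - C \geqslant t_0 - C$ (with $C$ depending only on the geometry), and similarly the forward $\varphi$-orbit of $Q_-^{-1}(z)$ remains in $U$ for time at least $t_0 - C$. By the Anosov property of \S\ref{subsec:anosov} together with the isolating block structure, the set of points in $\overline U$ whose past (resp.\ future) $\varphi$-orbit stays in $U$ for time $\geqslant T$ shrinks exponentially to $\Lambda_+ = \Gamma_+$ (resp.\ $\Lambda_- = \Gamma_-$) as $T \to +\infty$. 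Therefore, for $t_0$ large enough, $Q_\pm^{-1}(z)$ lies in the neighborhood of $\Lambda_\pm$ on which $\psi_\pm \equiv 1$, and $\psi_\pm(Q_\pm^{-1}(z)) = 1$.

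The main obstacle is the geometric claim that an entire excursion of $\gamma$ between two consecutive $\partial D_0$-bounces lies in the isolating block $U$ except for a uniformly bounded time at each end. This should follow from choosing $U$ as a sufficiently large phase-space neighborhood of $\pi^{-1}(\partial D_1 \cup \dots \cup \partial D_r)$, combined with the non-eclipse condition and the uniform upper bound on the lengths of free-flight segments in the non-eclipsing billiard configuration $\bold{B}'$.
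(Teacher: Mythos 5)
Your argument follows the same route as the paper's: the factor $\phi_\pm$ evaluates to $1$ because $z\in K'$, and the factor $\psi_\pm$ evaluates to $1$ because a long free interval forces the crossing point $Q_\pm^{-1}(z)\in\partial U$ to lie near $\Gamma_\pm$; if anything you are a bit more careful than the paper in working with $Q_\pm^{-1}(z)$ rather than $B'_\pm(z)$, which is the right point to look at since $\chi_\pm$ lives on $\partial U$. The only thing to correct is the speculative remark about the ``main obstacle.'' You cannot enlarge $U$: the isolating block is fixed once and for all in \S\ref{sec:preliminaries} as a \emph{small} neighborhood of the trapped set $\Lambda$ of $\mathbf{B}$, and the Pollicott--Ruelle resolvent, the bundles $E_\pm$ and the scattering operator are all anchored to that choice, so re-choosing it would invalidate everything downstream (and a large neighborhood of $\pi^{-1}(\partial D_1\cup\dots\cup\partial D_r)$ would not be an isolating block for $\Lambda$ anyway). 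The geometric claim you need is nevertheless true for the given $U$, but the reason is hyperbolicity, not a generous choice of $U$: by the coding/expansiveness estimate underlying Lemma~\ref{lem:comparelength} (i.e.\ Lemma~\ref{lem:exponentiallyclose}), a point whose $\mathbf{B}$-orbit hits $D_1,\dots,D_r$ at least $m$ times in both time directions lies within $C\beta^{-m}$ of $\Lambda$ and hence in $U$ once $m$ is large enough for the fixed $U$; this gives your claim with a constant depending only on $U$. With the one-sided version of that estimate one can in fact bypass the compactness detour entirely and conclude directly that $\mathrm{dist}(Q_\pm^{-1}(z),\Gamma_\pm)\leqslant C\beta^{-m}$ with $m=m(t_0)\to\infty$, which is exactly the form in which the paper's proof is written.
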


\begin{proof}
Let $\gamma$ as above (for some large $t_0 > 0$ which will be chosen later) and $z \in R(\gamma)$ (see \S\ref{subsec:flattrace}). Let $z_\pm = B'_\pm(z)$. Then $z_\pm \in \Lambda_{\pm}^{(m)}$, where $m = m(t_0) \to +\infty$ as $t_0 \to +\infty$. Here we set
$$
\Lambda_\pm^{(m)} = \{z \in M~:~ \sharp T_\pm(z) \geqslant m\}.
$$
In particular, by the proof of Lemma \ref{lem:comparelength} we have
$
\mathrm{dist}(z_\pm, \Gamma_\pm) \leqslant C \beta^{-m}.
$
Thus if $t_0 > 0$ is big enough, we have $\chi_\pm(z_\pm) = 1$ since $\chi_\pm \equiv 1$ on $\Gamma_\pm.$ As a consequence $\varrho(z) = 1$, also by definition of $\varrho$. Finally we have $I_\varrho(\gamma) = \prod_{z \in R(\gamma)} \varrho(z)^2 = 1$.
\end{proof}
For any $t_0 > 0$ we will denote $\widetilde N(n,t_0, t) = \sharp \widetilde \Pc_\bold{B'}(n, t_0, t)$ where
$$
\widetilde \Pc_\bold{B'}(n, t_0, t) = \{\gamma \in \widetilde \Pc_{\bold{B'}}~:~r(\gamma) = n, ~\ell(I_j(\gamma)) \leqslant t_0 \text{ for some } 1\leqslant j \leqslant n\}.
$$

\begin{lemm}\label{lem:2}
Let $t_0 > 0$ and $n \geqslant 2$. Then for some $C > 0$ we have for $t$ large enough
$$
\widetilde N (n, t_0, t) \leqslant C t^{n-2} \exp(h_\bold{B} t).
$$
\end{lemm}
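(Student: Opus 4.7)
The plan is to adapt the inductive decomposition used in the proof of Proposition \ref{prop:aprioribound}. Given $\gamma \in \widetilde\Pc_\bold{B'}(n, t_0, t)$ with $\ell(I_{j_0}(\gamma)) \leqslant t_0$ for some index $j_0 \in \{1,\dots,n\}$, I would split the cyclic symbolic word of $\gamma$ at the zero that starts the short interval $I_{j_0}$, writing it as a concatenation $uv$, where $u \in A(n-1)$ encodes the complement of the short piece (and therefore contains $n-1$ zeros) and $v \in A(1)$ encodes the short piece (containing a single zero).

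By Lemma \ref{lem:comparelength} applied to the segment represented by $v$, one has $\tau(\gamma_v) \leqslant t_0 + C$. Since the first return time to $\partial D$ is bounded below by a strictly positive constant (the obstacles being pairwise disjoint and non-eclipsing), this forces the number of letters of $v$ to be at most some $M = M(t_0)$, so that $v$ ranges over a finite subset of $A(1)$ of cardinality at most $C_1 = C_1(t_0)$. A second application of Lemma \ref{lem:comparelength}, to $\gamma$ itself, gives $\tau(\gamma_u) \leqslant \tau(\gamma) - \tau(\gamma_v) + 2C \leqslant t + 2C$.

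The assignment $\gamma \mapsto (u,v)$ (once the short interval $I_{j_0}$ is chosen) is injective since the cyclic word $uv$ determines $\gamma$, and each orbit $\widetilde\gamma \in \widetilde \Pc_\bold{B'}(n-1, T)$ is represented by at most $n-1$ words in $A(n-1)$ (one starting position per zero in the cyclic word). Therefore
$$
\widetilde N(n, t_0, t) \;\leqslant\; C_1 \sum_{\substack{u \in A(n-1) \\ \tau(\gamma_u)\leqslant t + 2C}} 1 \;\leqslant\; C_1\,(n-1)\,\widetilde N(n-1, t+2C).
$$
Applying the upper bound in (\ref{eq:aprioribound}) (which, as observed just before Proposition \ref{prop:aprioribound}, also holds with $N$ replaced by $\widetilde N$) yields $\widetilde N(n-1, t+2C) \leqslant C t^{n-2} \exp(h_\bold{B} t)$ for $t$ large, completing the proof.

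The only mildly delicate step is verifying that a uniform upper bound on $\tau(\gamma_v)$ does translate into a uniform upper bound on the symbolic length of $v$ (so that $v$ indeed ranges over a truly finite set), and that the long piece $u$ can be counted as an element of $A(n-1)$ without any residual closure constraint. Both points follow from Lemma \ref{lem:comparelength} together with the uniform positive lower bound on the first return time to $\partial D$; the rest is bookkeeping that directly mirrors the inductive step of Proposition \ref{prop:aprioribound}.
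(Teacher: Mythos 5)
Your proposal is correct and uses essentially the same argument as the paper: decompose the cyclic word of $\gamma$ into $u \in A(n-1)$ and $v \in A(1)$, control the periods $\tau(\gamma_u)$ and $\tau(\gamma_v)$ via Lemma~\ref{lem:comparelength}, observe that $v$ ranges over a finite set depending only on $t_0$, and bound the number of admissible $u$'s via Proposition~\ref{prop:aprioribound} for $\widetilde N(n-1,\cdot)$. The only difference is cosmetic: you spell out why the set of short words $v$ is finite (uniform lower bound on the return time plus the length bound), whereas the paper takes this for granted.
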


\begin{proof}
By Lemma \ref{lem:comparelength}, there is $C > 0$ such that the following holds. Every trajectory $\gamma \in \widetilde \Pc_\bold{B'}(n, t_0, t)$ can be represented by a word in $\widetilde \Sigma_N'$ obtained by the concatenation of two words $u \in A(n-1)$ and $v \in A(1)$ satisfying
$$
\tau(\gamma_u) \leqslant t + C, \quad \tau(\gamma_v) \leqslant t_0 + C. 
$$
Now for $t$ large enough one has
$$
\sharp\{u \in A(n-1) ~:~ \tau(\gamma_u) \leqslant t +C\} \leqslant (n-1) (t+C)^{n-2} \exp(h_\bold{B} t)
$$
by Proposition \ref{prop:aprioribound}. As $\{v \in A(1)~:~\tau(\gamma_v) \leqslant t_0 + C\}$ is finite, the lemma is proven.
\end{proof}

\subsection{Proof of Theorem \ref{thm:main}}
First, we note that the constants $c_\pm$ given in \S\ref{subsec:tauberian} is positive. Indeed, if $c_\pm = 0$, then $s \mapsto \strf(\varrho \Tc_\pm(s))$ would be regular at $s = h_\bold{B}$ by the proof of Proposition \ref{prop:firstequivalent}. In particular, we would have
$$
N_\varrho(1, t) \ll \exp(h_\bold{B} t), \quad t \to \infty.
$$
However, by Lemma \ref{lem:1}, we have $I_\varrho(\gamma) = 1$ whenever $\tau(\gamma)$ is large enough and $r(\gamma)  =1 $, which gives $N_\varrho(1, t) \sim N(1,t)$ as $t \to \infty.$  Now $N(1,t) \geqslant C \exp(h_\bold{B} t)$ for large $t$ by Proposition \ref{prop:aprioribound}, which contradicts the fact that $N_\varrho(1, t) \ll \exp(h_\bold{B} t)$. Thus $c_\pm > 0$.
By Lemmas \ref{lem:1} and \ref{lem:2} we have
$$
N(n,t) - N_\varrho(n, t) \leqslant N(n, t, t_0) \leqslant Ct^{n-2} \exp(h_\bold{B} t).
$$
Thus, by Propositions \ref{prop:firstequivalent} and \ref{prop:aprioribound}, we obtain $N_\varrho(n, t) \sim N(n, t)$ as $t\to \infty$, which reads
$$
N(n,t) \sim \frac{(c_\pm t)^n}{n !} \frac{\e^{h_\bold{B}t}}{h_\bold{B} t}, \quad t \to \infty.
$$
This concludes the proof of Theorem \ref{thm:main}.

\bibliographystyle{alpha}
\bibliography{bib.bib}

\end{document}